\theoremstyle{plain}
\newtheorem{theorem}{Theorem}[section]
\newtheorem{lemma}[theorem]{Lemma}
\theoremstyle{definition}
\theoremstyle{remark}
\newtheorem{remark}[theorem]{Remark}
\numberwithin{equation}{section}
\begin{document}

\title{A Nonlinear Logistic Model for Age-Structured Populations: Analysis
of Long-Term Dynamics and Equilibria}
\author{ Dragos-Patru Covei \\
{\small The Bucharest University of Economic Studies, Department of Applied
Mathematics}\\
{\small Piata Romana, 1st district, Postal Code: 010374, Postal Office: 22,
Romania}\\
{\small \texttt{dragos.covei@csie.ase.ro} }}
\date{}
\maketitle

\begin{abstract}
This paper investigates a nonlinear logistic model for age-structured
population dynamics. The model incorporates interdependent fertility and
mortality functions within a logistic framework, offering insights into
stationary solutions and asymptotic behavior. Theoretical findings establish
conditions for the existence and uniqueness of equilibrium solutions and
explore long-term population dynamics. This study provides valuable tools
for demographic modeling and opens avenues for further mathematical
exploration. \newline
\noindent\textbf{Keywords:} Nonlinear systems; Population dynamics; Solution
existence

\noindent\textbf{MSC Classification:} 35K55, 35K51, 92D25
\end{abstract}

\section{Introduction}

Population dynamics remain a cornerstone of mathematical biology, offering a
robust framework to analyze changes in population structures and their
long-term behavior. Within this field, age-structured population dynamics
are of particular interest, as they allow researchers to account for
variations in demographic patterns influenced by age. These models provide
critical tools for understanding growth, stability, and decline within
populations.

Building upon foundational work by McKendrick \cite{McKen} and Lotka \cite%
{ShaLot}, subsequent studies by Brauer et al. \cite{Bra1}, Hoppensteadt \cite%
{Hop}, Iannelli et al. \cite{2}, and Webb \cite{7} have incorporated
nonlinear factors, such as density-dependent fertility and mortality rates.
These innovations have enabled detailed analyses of equilibrium states and
bifurcation phenomena, shedding light on the stability and asymptotic
dynamics of populations under various conditions.

"Recent" advancements by Gurtin and MacCamy \cite{GG1,GM} emphasize the
importance of survival and fertility functions in shaping demographic
trajectories. This study builds upon their work by investigating a nonlinear
logistic model designed specifically for age-structured populations. The
model integrates age-specific density functions and interdependent
reproduction rates, offering a comprehensive approach to studying
equilibrium solutions and long-term population behavior.

The primary objectives of this paper are to analyze the existence and
stability of equilibrium solutions within this nonlinear framework,
investigate how fertility and mortality rates influence the asymptotic
behavior of populations, and provide theoretical tools to bridge
mathematical insights with practical applications. A key contribution of
this study is addressing a longstanding conjecture proposed by \cite{GM},
which has been resolved under particular circumstances in our recent work 
\cite{CPS}.

Age-structured models hold significant value due to their ability to capture
the nuanced impact of age distributions on population growth and stability.
By exploring this nonlinear logistic model, the study contributes not only
to theoretical advancements but also to real-world applications in ecology
and demography.

The remainder of this paper is organized as follows: Section \ref{mm}
introduces the nonlinear logistic model, highlighting its mathematical
formulation and assumptions. Section \ref{RSD} reformulates the problem as a
system of nonlinear differential equations to facilitate analysis. Section %
\ref{RD} presents the main results of the study, focusing on equilibrium and
asymptotic conditions. Section \ref{IE} discusses practical applications of
these findings, while Section \ref{a} presents a Python implementation that
visualizes the example under consideration, thereby reinforcing the
practical relevance of our results and confirming their correctness..
Section \ref{fin}, provides further commentary on the results obtained.
Sections \ref{fd}-\ref{CFD} concludes the paper with directions for future
research.

\section{Mathematical Model\label{mm}}

The nonlinear logistic model for age-structured populations is formulated to
describe the dynamic interplay between fertility and mortality rates as
functions of the total population size over time. The derivation process
integrates key biological principles into a system of partial differential
equations (PDEs), which account for age-specific densities, survival
probabilities, and birth rates.

The model is governed by the following system of equations: 
\begin{equation}
\left\{ 
\begin{array}{l}
\frac{\partial p(a,t)}{\partial t}+\frac{\partial p(a,t)}{\partial a}+\mu
(P(t))p(a,t)=0, \\ 
\\ 
p(0,t)=\int_{0}^{\infty }\beta (\sigma ,P(t))p(\sigma ,t)d\sigma , \\ 
\\ 
p(a,0)=p_{0}(a), \\ 
\\ 
P(t)=\int_{0}^{\infty }p(\sigma ,t)d\sigma ,%
\end{array}%
\right.  \label{SNPM}
\end{equation}%
where the terms are defined as follows:

\begin{itemize}
\item \quad $p(a,t)$: The age-density function representing the distribution
of individuals by age $a$ at time $t$.

\item \quad $P(t)$: The total population size at time $t$, derived by
integrating $p(a,t)$ over all ages.

\item \quad $\mu (P(t))$: The age-independent mortality rate, which is a
function of the total population size $P(t)$, capturing resource competition
or other density-dependent effects.

\item \quad $\beta (a,P(t))$: The age-specific fertility function that
depends on both age $a$ and total population size $P(t)$.
\end{itemize}

\subsection{General Assumptions}

In constructing the nonlinear logistic model for age-structured populations,
the following assumptions are made about the fertility and mortality
functions to ensure mathematical tractability and biological realism.

The fertility function, $\beta (a,P(t))$, is assumed to decay exponentially
with age and is modeled as a polynomial in age, reflecting
population-dependent coefficients: 
\begin{equation}
\beta (a,P(t))=e^{-\rho a}\sum_{i=0}^{n}\beta _{i}(P(t))a^{i},
\label{eq:fertility_function}
\end{equation}%
where:

\begin{itemize}
\item \quad $\rho >0$ is a parameter controlling the exponential decay with
age.

\item \quad $\beta _{i}$ are coefficients dependent on the total population
size $P(t)$, capturing the effects of crowding or resource availability on
reproduction, satisfying: 
\begin{equation}
\beta _{i}^{\prime }(x)<0\text{ for any }x\in \left[ 0,\infty \right) ,\quad
\lim_{x\rightarrow \infty }\beta _{i}(x)=0,\quad \beta _{i}(0)\in \lbrack
0,\infty ),\quad \beta _{0}(0)\neq 0.  \label{cb}
\end{equation}

\item \quad $\beta (a,x)>0,\quad \forall a\geq 0$ and $x\geq 0$.
\end{itemize}

The mortality function, $\mu (P(t))$, increases with the population size,
representing the survival probabilities affected by resource limitations or
environmental constraints. It satisfies the following properties:

\begin{itemize}
\item \quad \textbf{Positivity:} 
\begin{equation}
\mu (x)>0,\quad \forall x\geq 0.  \label{cm}
\end{equation}

\item \quad \textbf{Monotonicity:} 
\begin{equation}
\mu ^{\prime }(x)>0,\quad \forall x\geq 0,  \label{cmd}
\end{equation}%
indicating that the mortality rate increases with the population size.

\item \quad \textbf{Asymptotic behavior:} 
\begin{equation}
\mu (x)\rightarrow \infty \text{ as }x\rightarrow \infty .  \label{li}
\end{equation}

\item \quad The normalized age profile of the population is defined as: 
\begin{equation}
\phi (a,t)=\frac{p(a,t)}{P(t)},  \label{cpp}
\end{equation}%
providing the relative distribution of individuals across different age
groups at any given time $t$.
\end{itemize}

\subsubsection{Smoothness conditions}

To ensure the system's well-posedness, it is assumed that all functions 
\begin{equation*}
\beta _{i}:\left[ 0,\infty \right) \times \left[ 0,\infty \right)
\rightarrow \left( 0,\infty \right) \text{, }(i=0,1,...,n),
\end{equation*}%
and $\mu :\left[ 0,\infty \right) \rightarrow \left( 0,\infty \right) $ are
sufficiently smooth, i.e., they belong to the class $C^{1}$, having
continuous first derivatives with respect to their arguments.

\subsection{Derivation and Dynamics}

The first equation in the model (\ref{SNPM}) describes the transport
dynamics of individuals across age classes, incorporating survival through
the mortality rate $\mu (P\left( t\right) )$. The second equation models the
renewal process, where the age-zero density $p(0,t)$ is determined by
integrating the age-specific fertility rates over the entire population. The
initial condition $p(a,0)=p_{0}(a)$ specifies the initial age distribution,
while $P(t)$ tracks the total population over time. This mathematical
framework extends classical models by introducing nonlinear dependencies on
population size, offering a realistic and adaptable tool for analyzing
population evolution over time.

\subsection{Net Reproduction Rate}

The net reproduction rate, denoted by $R_{n}(x)$, is defined as: 
\begin{equation}
R_{n}(x)=\int_{0}^{\infty }\sum_{i=0}^{n}\beta _{i}(x)a^{i}e^{-(\rho +\mu
(x))a}da=\sum_{i=0}^{n}\frac{\beta _{i}(x)i!}{(\rho +\mu (x))^{i+1}}.
\label{rr}
\end{equation}%
This quantity satisfies:

\begin{itemize}
\item \quad \textbf{Decreasing behavior:} 
\begin{equation}
R_{n}^{\prime }(x)<0,\quad \forall x\geq 0,  \label{rd}
\end{equation}%
indicating that the reproduction rate decreases as population size increases.
\end{itemize}

Indeed, $R_{n}\left( x\right) $ is a strictly decreasing function. To see
this, we differentiate \eqref{rr} term by term using the quotient rule. For
each term $\frac{\beta _{i}(x)i!}{(\rho +\mu (x))^{i+1}}$, we have 
\begin{equation*}
\frac{d}{dx}\left[ \frac{\beta _{i}(x)i!}{(\rho +\mu (x))^{i+1}}\right] =%
\frac{\beta _{i}^{\prime }(x)i!}{(\rho +\mu (x))^{i+1}}-\frac{(i+1)\beta
_{i}(x)i!\mu ^{\prime }(x)}{(\rho +\mu (x))^{i+2}}.
\end{equation*}%
Summing over all $i=0,\ldots ,n$ yields 
\begin{equation*}
R_{n}^{\prime }(x)=\overset{n}{\underset{i=0}{\Sigma }}\frac{\beta
_{i}^{\prime }\left( x\right) i!}{\left( \rho +\mu \left( x\right) \right)
^{i+1}}-\overset{n}{\underset{i=0}{\Sigma }}\frac{\left( i+1\right) \beta
_{i}\left( x\right) i!\mu ^{\prime }\left( x\right) }{\left( \rho +\mu
\left( x\right) \right) ^{i+2}}.
\end{equation*}%
Now, from the conditions \eqref{cb} and \eqref{cmd}, we have 
\begin{equation*}
\beta _{i}^{\prime }(x)<0\text{ and }\mu ^{\prime }(x)>0\text{ for all }%
x\geq 0.
\end{equation*}%
Therefore, each term in the first sum is strictly negative, and each term in
the second sum is also strictly negative (note that $\beta _{i}(x)>0$ from
the assumptions). Hence, $R_{n}^{\prime }(x)<0$ for all $x\geq 0$,
confirming that $R_{n}$ is strictly decreasing.

\begin{itemize}
\item \quad \textbf{Asymptotic behavior:}%
\begin{equation*}
\lim_{x\rightarrow \infty }R_{n}(x)=0,\quad R_{n}(0)=\sum_{i=0}^{n}\frac{%
\beta _{i}(0)i!}{(\rho +\mu (0))^{i+1}}\overset{notation}{=}R_{0},
\end{equation*}%
is satisfied in light of the asymptotic conditions on $\beta$ and $\mu$.
Indeed, from \eqref{cb}, we have $\lim_{x \to \infty} \beta_i(x) = 0$ for
all $i = 0, \ldots, n$, and from \eqref{li}, we have $\lim_{x \to \infty}
\mu(x) = \infty$. Therefore, for each term in \eqref{rr}: 
\begin{equation*}
\lim_{x \to \infty} \frac{\beta_i(x)i!}{(\rho + \mu(x))^{i+1}} = \lim_{x \to
\infty} \frac{\beta_i(x)}{(\rho + \mu(x))^{i+1}} \cdot i! = 0,
\end{equation*}%
since the numerator tends to $0$ and the denominator tends to $\infty$. By
the sum of limits, $\lim_{x \to \infty} R_n(x) = 0$. The value at $x = 0$ is
obtained by direct substitution into \eqref{rr}.
\end{itemize}

\subsection{Initial Population Distribution}

The initial age-density function, $p_0(a)$, satisfies:

\begin{itemize}
\item \quad \textbf{Non-negativity:} 
\begin{equation}
p_{0}(a)>0,\quad \forall a\geq 0.  \label{in}
\end{equation}

\item \quad \textbf{Normalization:} 
\begin{equation}
P(0)=\int_{0}^{\infty }p_{0}(a)da<\infty ,  \label{pn}
\end{equation}%
ensuring that the total initial population size is finite.
\end{itemize}

\section{Resulting System of Differential Equations \label{RSD}}

As is commonly known (see page 154 in~\cite{2} or \cite[Theorem 6, pages
288-289]{GG1}), since we have a single weighted size $P$, we use the fact
that nontrivial stationary sizes $P^{\ast }$ must~satisfy

\begin{equation}
R_{n}\left( P^{\ast }\right) =1.  \label{nsc}
\end{equation}%
This condition is both necessary and sufficient for nontrivial stationary
sizes to exist with total population $P^{\ast }$.

Next, let us focus on the task of finding $P(t)$. We will denote this by%
\begin{equation*}
P_{i}\left( t\right) =\int_{0}^{\infty }\sigma ^{i}e^{-\rho \sigma }p\left(
\sigma ,t\right) d\sigma \text{, for }i=0,1,2,..n.
\end{equation*}%
The subsequent step is to note that the renewal condition, the total birth
rate, or the fertility rate at time $t$ can be expressed in the new
notations as follows: 
\begin{equation}
p\left( 0,t\right) =\overset{n}{\underset{i=0}{\Sigma }}\beta _{i}\left(
P\left( t\right) \right) \int_{0}^{\infty }\sigma ^{i}e^{-\rho \sigma
}p\left( \sigma ,t\right) d\sigma =\overset{n}{\underset{i=0}{\Sigma }}\beta
_{i}\left( P\left( t\right) \right) P_{i}\left( t\right) .  \label{fr}
\end{equation}%
Furthermore, we compute the first derivative of $P(t)$. Using Leibniz's rule
for differentiation under the integral sign:%
\begin{equation*}
P^{\prime }\left( t\right) =\frac{d}{dt}\int_{0}^{\infty }p\left( a,t\right)
da=\int_{0}^{\infty }\frac{\partial p}{\partial t}\left( a,t\right) da.
\end{equation*}%
From the first equation in \eqref{SNPM}, we have 
\begin{equation*}
\frac{\partial p}{\partial t}=-\frac{\partial p}{\partial a}-\mu
(P(t))p(a,t).
\end{equation*}
Substituting this and using integration by parts:%
\begin{align*}
P^{\prime }\left( t\right) & =\int_{0}^{\infty }\left( -\frac{\partial p}{%
\partial a}-\mu (P(t))p(a,t)\right) da \\
& =-\left[ p(a,t)\right] _{a=0}^{a=\infty }-\mu (P(t))\int_{0}^{\infty
}p(a,t)da \\
& =-\left( 0-p(0,t)\right) -\mu (P(t))P(t) \\
& =p(0,t)-\mu (P(t))P(t).
\end{align*}%
Using equation \eqref{fr}, we obtain 
\begin{equation*}
P^{\prime }\left( t\right) =\overset{n}{\underset{i=0}{\Sigma }}\beta
_{i}\left( P\left( t\right) \right) P_{i}\left( t\right) -\mu \left( P\left(
t\right) \right) P\left( t\right) .
\end{equation*}%
Similarly, for 
\begin{equation*}
P_{0}(t)=\int_{0}^{\infty }e^{-\rho a}p(a,t)da,
\end{equation*}%
we compute%
\begin{align*}
P_{0}^{\prime }\left( t\right) & =\int_{0}^{\infty }e^{-\rho a}\frac{%
\partial p}{\partial t}\left( a,t\right) da \\
& =\int_{0}^{\infty }e^{-\rho a}\left( -\frac{\partial p}{\partial a}-\mu
(P(t))p(a,t)\right) da \\
& =-\left[ e^{-\rho a}p(a,t)\right] _{a=0}^{a=\infty }+\int_{0}^{\infty
}\rho e^{-\rho a}p(a,t)da-\mu (P(t))P_{0}(t) \\
& =p(0,t)-\rho P_{0}(t)-\mu (P(t))P_{0}(t) \\
& =\overset{n}{\underset{i=0}{\Sigma }}\beta _{i}\left( P\left( t\right)
\right) P_{i}\left( t\right) -\left( \rho +\mu (P(t))\right) P_{0}(t).
\end{align*}%
For 
\begin{equation*}
P_{i}\left( t\right) =\int_{0}^{\infty }a^{i}e^{-\rho a}p(a,t)da\text{ with }%
i\geq 1,
\end{equation*}
we obtain 
\begin{align*}
P_{i}^{\prime }\left( t\right) & =\int_{0}^{\infty }a^{i}e^{-\rho a}\frac{%
\partial p}{\partial t}\left( a,t\right) da \\
& =\int_{0}^{\infty }a^{i}e^{-\rho a}\left( -\frac{\partial p}{\partial a}%
-\mu (P(t))p(a,t)\right) da \\
& =-\left[ a^{i}e^{-\rho a}p(a,t)\right] _{a=0}^{a=\infty }+\int_{0}^{\infty
}\left( ia^{i-1}e^{-\rho a}-\rho a^{i}e^{-\rho a}\right) p(a,t)da-\mu
(P(t))P_{i}(t) \\
& =iP_{i-1}(t)-\rho P_{i}(t)-\mu (P(t))P_{i}(t) \\
& =iP_{i-1}\left( t\right) -\left( \rho +\mu \left( P\left( t\right) \right)
\right) P_{i}\left( t\right) ,
\end{align*}%
where we used integration by parts with 
\begin{equation*}
u=a^{i}e^{-\rho a}\text{ and }dv=\frac{\partial p}{\partial a}da.
\end{equation*}%
Based on the assumptions outlined in the previous section and the
formulation of the population model, the following system of first-order
differential equations describes the dynamics of the total population and
its age moments:

\begin{equation}
\begin{cases}
P^{\prime }(t)=-\mu (P(t))P(t)+\sum_{i=0}^{n}\beta _{i}(P(t))P_{i}(t), \\ 
\\ 
P_{0}^{\prime }(t)=\left( -\rho -\mu (P(t))\right)
P_{0}(t)+\sum_{i=0}^{n}\beta _{i}(P(t))P_{i}(t), \\ 
\\ 
P_{i}^{\prime }(t)=iP_{i-1}(t)-\left( \rho +\mu (P(t))\right) P_{i}(t),\quad
i=1,2,\dots ,n,\text{ where }n\geq 1.%
\end{cases}
\label{stud}
\end{equation}%
The first equation governs the total population size $P(t)$, accounting for
both mortality and fertility effects. The subsequent equations describe the
evolution of the age moments $P_{i}(t)$, incorporating the interplay between
age-specific fertility and mortality. These equations (\ref{stud}) are
complemented by the initial conditions: 
\begin{equation}
\begin{cases}
P(0)=P_{0}, \\ 
\\ 
P_{i}(0)=P_{0}^{i},\quad i=0,1,2,\dots ,n,%
\end{cases}
\label{init}
\end{equation}%
where: 
\begin{equation*}
P(0)=\int_{0}^{\infty }p_{0}(a)da,\quad P_{i}(0)=\int_{0}^{\infty
}a^{i}e^{-\rho a}p_{0}(a)da,\quad i=0,1,2,\dots ,n.
\end{equation*}%
Furthermore, the system (\ref{stud})-(\ref{init}) can be represented in
matrix form for compactness: 
\begin{equation}
\begin{cases}
G^{\prime }(t)=A(P(t))G(t), \\ 
\\ 
G(0)=G_{0},%
\end{cases}
\label{afin}
\end{equation}%
where: 
\begin{equation*}
G(t)=%
\begin{pmatrix}
P(t) \\ 
P_{0}(t) \\ 
P_{1}(t) \\ 
\vdots \\ 
P_{n}(t)%
\end{pmatrix}%
,\quad G_{0}=%
\begin{pmatrix}
P_{0} \\ 
P_{0}^{0} \\ 
P_{0}^{1} \\ 
\vdots \\ 
P_{0}^{n}%
\end{pmatrix}%
\quad
\end{equation*}%
and the matrix $A\left( P\left( t\right) \right) $ is defined as:%
\begin{equation*}
A(P(t))=%
\begin{pmatrix}
-\mu (P(t)) & \beta _{0}(P(t)) & \beta _{1}(P(t)) & \cdots & \beta _{n}(P(t))
\\ 
0 & -\rho -\mu (P(t))+\beta _{0}(P(t)) & \beta _{1}(P(t)) & \cdots & \beta
_{n}(P(t)) \\ 
0 & 1 & -\rho -\mu (P(t)) & \cdots & 0 \\ 
\vdots & \vdots & \vdots & \ddots & \vdots \\ 
0 & 0 & 0 & n & -\rho -\mu (P(t))%
\end{pmatrix}%
.
\end{equation*}%
This system provides a comprehensive mathematical framework to analyze
population dynamics with an age-structured approach. According to the theory
of ordinary differential equations, the matrix equation (\ref{afin}) admits
a unique global solution (see for example \cite[Theorem 4.12, p. 133]{MS}).
The solution belongs to the class of functions $C^{1}$, meaning it is
continuous and has continuous first derivatives (see for example \cite[%
Observation 4.13, p. 133]{MS}).

Next, a first observation is given in the following.

\begin{lemma}
\label{inv}The set 
\begin{equation*}
\left\{ 0<P_{i}\left( t\right) <\infty \text{ and }0<P\left( t\right)
<\infty \left\vert i=0,1,...,n\text{ and }t\in \left[ 0,\infty \right)
\right. \right\} ,
\end{equation*}%
is time invariant in the future for system (\ref{afin}). That is, the
trajectories of (\ref{afin}) are always positive for all $t\in \left[
0,\infty \right) $.
\end{lemma}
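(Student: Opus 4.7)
The plan is to leverage the linear structure of each scalar ODE in (\ref{stud}) after freezing $P(\cdot)$ as a continuous input, by applying the variation-of-constants formula. Since global existence of the $C^{1}$ solution to (\ref{afin}) is already guaranteed, every component is automatically finite on each bounded time interval, so the only non-trivial point is strict positivity. From (\ref{in})--(\ref{pn}) together with $\sigma^{i} e^{-\rho \sigma} > 0$, the initial data $P(0), P_{0}(0), \ldots, P_{n}(0)$ are all strictly positive.

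Setting $M(s) := \rho + \mu(P(s))$, multiplication of the $i$-th equation ($i \geq 1$) in (\ref{stud}) by the integrating factor $\exp\!\left(\int_{0}^{t} M(s)\, ds\right)$ and integration over $[0,t]$ yield
\begin{equation*}
P_{i}(t) = P_{i}(0)\, e^{-\int_{0}^{t} M(s)\, ds} + i \int_{0}^{t} P_{i-1}(s)\, e^{-\int_{s}^{t} M(r)\, dr}\, ds.
\end{equation*}
The same procedure applied to the $P_{0}$-equation (keeping the full sum $\sum_{i} \beta_{i}(P(s))\, P_{i}(s)$ as forcing) and to the $P$-equation (using the integrating factor $\exp\!\left(\int_{0}^{t} \mu(P(s))\, ds\right)$) produces analogous integral identities for $P_{0}(t)$ and $P(t)$, with manifestly non-negative forcing integrands whenever the other components are non-negative.

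I would then argue by contradiction. Define $t^{\ast} := \inf\{\, t > 0 : \min(P(t), P_{0}(t), \ldots, P_{n}(t)) \leq 0 \,\}$ and suppose $t^{\ast} < \infty$. On $[0, t^{\ast})$ all components are strictly positive, and the continuous functions $M$ and $\mu(P(\cdot))$ are bounded on the closed interval $[0,t^{\ast}]$, so the exponentials $e^{-\int_{0}^{t^{\ast}} M\, ds}$ and $e^{-\int_{0}^{t^{\ast}} \mu(P)\, ds}$ are strictly positive. Each integral representation then gives a value at $t^{\ast}$ bounded below by its strictly positive initial term (plus a non-negative integral), so $P(t^{\ast}), P_{0}(t^{\ast}), \ldots, P_{n}(t^{\ast})$ are all strictly positive, contradicting the definition of $t^{\ast}$. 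Hence $t^{\ast} = \infty$ and all trajectories remain in $(0, \infty)$ for every $t \geq 0$.

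The main conceptual obstacle is the cyclic coupling of the system: $P(t)$ enters the coefficients $\mu, \beta_{i}$ that govern the $P_{i}$-equations, while the $P_{i}$ simultaneously drive $P$ through the birth term. The integrating-factor trick sidesteps this cleanly, because on $[0, t^{\ast})$ the function $P(\cdot)$ is treated simply as a continuous positive input, so only the \emph{signs} of $\beta_{i}(P)$ (positive by (\ref{cb})) and $\mu(P)$ (positive by (\ref{cm})) are needed, not any quantitative control. A minor bookkeeping point is that the $P_{0}$-equation contains a self-coupling through $\beta_{0}(P)P_{0}$, but this is absorbed automatically into the forcing sum under the integrating factor $e^{\int M}$, so no special treatment is required.
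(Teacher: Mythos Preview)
Your argument is correct and follows essentially the same route as the paper: both derive Duhamel/variation-of-constants representations for $P$, $P_{0}$, and the $P_{i}$, then conclude strict positivity from the strictly positive initial data and the non-negativity of the forcing terms. Your first-hitting-time contradiction is a slightly tidier packaging of the paper's informal ``induction'' step (and your choice to leave $\beta_{0}(P)P_{0}$ in the forcing rather than fold it into the exponent, as the paper does, is an inessential variant), but the underlying mechanism is identical.
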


\begin{proof}[Proof of Lemma \protect\ref{inv}]
We first derive the integral formulation of system (\ref{stud}) using the
variation of constants method. For the equation 
\begin{equation*}
P^{\prime }(t)=-\mu (P(t))P(t)+\sum_{i=0}^{n}\beta _{i}(P(t))P_{i}(t),
\end{equation*}%
we write 
\begin{equation*}
\frac{d}{dt}\left( e^{\int_{0}^{t}\mu (P(s))ds}P(t)\right)
=e^{\int_{0}^{t}\mu (P(s))ds}\sum_{i=0}^{n}\beta _{i}(P(t))P_{i}(t).
\end{equation*}%
Integrating from $0$ to $t$ and using $P(0)=P_{0}$, we obtain 
\begin{equation*}
P\left( t\right) =e^{-\int_{0}^{t}\mu \left( P\left( s\right) \right)
ds}\left( P_{0}+\int_{0}^{t}\overset{n}{\underset{i=0}{\Sigma }}\beta
_{i}\left( P\left( z\right) \right) P_{i}\left( z\right) e^{\int_{0}^{z}\mu
\left( P\left( s\right) \right) ds}dz\right) .
\end{equation*}%
Similarly, from 
\begin{equation*}
P_{0}^{\prime }(t)=-(\rho +\mu (P(t)))P_{0}(t)+\sum_{i=0}^{n}\beta
_{i}(P(t))P_{i}(t),
\end{equation*}%
we can rewrite this as 
\begin{equation*}
P_{0}^{\prime }(t)=-(\rho +\mu (P(t))-\beta
_{0}(P(t)))P_{0}(t)+\sum_{i=1}^{n}\beta _{i}(P(t))P_{i}(t).
\end{equation*}%
Applying the variation of constants method yields 
\begin{equation*}
P_{0}\left( t\right) =e^{-\int_{0}^{t}\left( \rho +\mu \left( P\left(
s\right) \right) -\beta _{0}\left( P\left( s\right) \right) \right)
ds}\left( P_{0}^{0}+\int_{0}^{t}\overset{n}{\underset{i=1}{\Sigma }}\beta
_{i}\left( P\left( z\right) \right) P_{i}\left( z\right)
e^{\int_{0}^{z}\left( \rho +\mu \left( P\left( s\right) \right) -\beta
_{0}\left( P\left( s\right) \right) \right) ds}dz\right) .
\end{equation*}%
For $i\geq 1$, from 
\begin{equation*}
P_{i}^{\prime }(t)=iP_{i-1}(t)-(\rho +\mu (P(t)))P_{i}(t)\text{ with }%
P_{i}(0)=P_{0}^{i},
\end{equation*}%
we obtain 
\begin{equation*}
P_{i}\left( t\right) =e^{-\int_{0}^{t}\left( \rho +\mu \left( P\left(
s\right) \right) \right) ds}\left( P_{0}^{i}+\int_{0}^{t}iP_{i-1}\left(
z\right) e^{\int_{0}^{z}\left( \rho +\mu \left( P\left( s\right) \right)
\right) ds}dz\right) \text{, }i=1,...,n.
\end{equation*}%
Now, we establish positivity by induction. Since $P_{0}>0$ (by \eqref{pn}),
and all exponential factors are strictly positive, the first formula shows $%
P(t)>0$ for all $t\geq 0$. Similarly, since $P_{0}^{0}>0$ (by \eqref{in} and
definition), we have $P_{0}(t)>0$ for all $t\geq 0$. By induction, if $%
P_{i-1}(t)>0$, then the integral formula for $P_{i}(t)$ with positive
initial data $P_{0}^{i}>0$ guarantees $P_{i}(t)>0$ for all $t\geq 0$. The
boundedness on finite intervals follows from the continuity of solutions to
the ODE system \eqref{afin}. The remainder of the proof, showing that
trajectories remain in the positive orthant for all $t\geq 0$, proceeds
similarly to the approach outlined in \cite{LI}.
\end{proof}

In what follows, we establish a conjecture that was left open in our recent
work \cite{CPS}.

\begin{lemma}
\label{ineq}Let $(P(t),P_{0}(t),\dots ,P_{n}(t))$ be the unique global
solution of the system (\ref{stud}) with initial data \ (\ref{init}) such
that 
\begin{equation*}
P_{n}(0)<\cdots <P_{0}(0)<P(0)\text{ and }R_{0}<1\text{.}
\end{equation*}%
Then, for all $t\in \lbrack 0,\infty )$,%
\begin{equation*}
P_{n}(t)<\cdots <P_{0}(t)<P(t).
\end{equation*}
\end{lemma}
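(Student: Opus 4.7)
The plan is to argue by strong induction on $i$, combined with a barrier (maximum-principle) argument at a hypothetical first crossing time.

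For the base case, set $D_0 := P - P_0$. Subtracting the $P_0$-equation from the $P$-equation in (\ref{stud}) gives the linear ODE
\begin{equation*}
D_0'(t) + \mu(P(t))\, D_0(t) = \rho\, P_0(t),
\end{equation*}
so that variation of constants—combined with $D_0(0) > 0$ and the positivity $P_0(t) > 0$ from Lemma \ref{inv}—immediately forces $D_0(t) > 0$ for all $t \geq 0$.

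For the inductive step, I would assume $P_{j-1}(t) > P_j(t)$ for all $t \geq 0$ and all $j \leq i-1$, and set $D_i := P_{i-1} - P_i$. A direct computation from (\ref{stud}) yields, for $i \geq 2$,
\begin{equation*}
D_i'(t) + \bigl(\rho + \mu(P(t))\bigr) D_i(t) = (i-1)\, D_{i-1}(t) - P_{i-1}(t),
\end{equation*}
while for $i = 1$,
\begin{equation*}
D_1'(t) + \bigl(\rho + \mu(P(t))\bigr) D_1(t) = p(0,t) - P_0(t), \qquad p(0,t) = \sum_{j=0}^n \beta_j(P(t))\, P_j(t).
\end{equation*}
If the conclusion fails, let $t^* > 0$ be the first time some $D_{i^*}$ reaches zero; then $D_{i^*}(t^*) = 0$, $D_j(t^*) \geq 0$ for $j \neq i^*$, and $D_{i^*}'(t^*) \leq 0$. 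I would close the argument by deriving the strict inequality $D_{i^*}'(t^*) > 0$, yielding a contradiction.

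The principal obstacle is that the right-hand sides above are not manifestly positive at $t^*$: for $i \geq 2$ the term $-P_{i-1}(t^*)$ is strictly negative by Lemma \ref{inv}, and for $i = 1$ the sign of $p(0, t^*) - P_0(t^*)$ is not clear a priori. This is where the hypothesis $R_0 < 1$ must enter in an essential way. Coupled with the monotonicity $R_n'(x) < 0$, which yields $R_n(P(t)) < 1$ for all $t \geq 0$, it should provide a quantitative bound on the birth-rate contribution $p(0, t^*) = \sum_j \beta_j(P(t^*)) P_j(t^*)$ relative to the $P_j(t^*)$'s already forced equal at the crossing, strong enough to force $D_{i^*}'(t^*) > 0$. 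A natural auxiliary tool is the log-convexity of the moment sequence, $P_i(t)^2 \leq P_{i-1}(t)\, P_{i+1}(t)$, obtained from Cauchy--Schwarz applied to the representation $P_i(t) = \int_0^\infty a^i e^{-\rho a} p(a,t)\, da$; the resulting monotonicity of the ratios $P_i/P_{i-1}$ in $i$ reduces the crucial step to the top moment inequality $P_n(t) < P_{n-1}(t)$, which is the most delicate and is precisely where the quantitative strength of $R_0 < 1$ should be decisive.
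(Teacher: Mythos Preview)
Your base case is correct and is the differential form of the paper's Step~2: from $D_0' + \mu(P)\,D_0 = \rho P_0$ with $D_0(0)>0$ and $P_0>0$ one gets $D_0(t)>0$ for all $t$, exactly what the paper's integral comparison yields.

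The genuine gap is in the step for $i\ge 1$, and the route you propose through $R_0<1$ cannot close it. Two concrete obstructions:

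\emph{(i)} For $i^*\ge 2$ the equation $D_{i^*}' + (\rho+\mu(P))\,D_{i^*} = (i^*-1)\,D_{i^*-1} - P_{i^*-1}$ contains no fertility coefficients at all, so $R_0<1$ simply has no entry point there. Your log-convexity reduction is valid---the first ratio $P_i/P_{i-1}$ to reach $1$ must indeed be $P_n/P_{n-1}$---but at that crossing the contradiction would require $(n-1)\,D_{n-1}(t^*) > P_{n-1}(t^*)$, i.e.\ $P_{n-2}(t^*)/P_{n-1}(t^*) > n/(n-1)$, whereas log-convexity combined with $P_n(t^*)=P_{n-1}(t^*)$ only yields $P_{n-2}(t^*)/P_{n-1}(t^*)\ge 1$. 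Nothing in your outline bridges that quantitative gap.

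\emph{(ii)} For $i^*=1$ you need $D_1'(t^*) = p(0,t^*) - P_0(t^*) > 0$, that is, the birth rate must \emph{exceed} $P_0(t^*)$. But $R_0<1$ (and $R_n(P(t))<1$) is an \emph{upper} bound on reproduction and pushes $p(0,t)$ down, not up---it works against the inequality you want. Concretely, in the case $n=1$ with $P_1(t^*)=P_0(t^*)$ one has $D_1'(t^*) = \bigl(\beta_0(P(t^*))+\beta_1(P(t^*))-1\bigr)\,P_0(t^*)$, and $R_0<1$ is perfectly compatible with $\beta_0(0)+\beta_1(0)<1$, which makes $D_1'(t^*)<0$---the wrong sign for your contradiction.

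For comparison, the paper argues via the Duhamel integral representations and an induction down the chain; it meets the identical sign obstruction in its Step~3 (the source $k\,P_{k-1}-(k+1)\,P_k$ is bounded below only by $-P_k$) and does not resolve it internally either, deferring instead to the comparison principle in \cite[Proposition~8.5]{2}. Your differential/first-crossing framework is equivalent in spirit, but the specific mechanism you invoke---$R_0<1$ forcing the sign at the crossing---does not work, and the argument remains incomplete without a different idea at that step.
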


\begin{proof}[Proof of Lemma \protect\ref{ineq}]
We argue by a combination of the integral representation (Duhamel-type
formulas) and mathematical induction on the chain of indices $n,n-1,\dots ,0$%
, leveraging that the system is cooperative, upper-triangular in the
age-moment block, and the fertility/mortality terms preserve positivity.

\medskip \noindent\textbf{Step 1: Positivity and integral representation.}
Under the standing assumptions (positivity and smoothness of $\mu$ and $%
\beta_i$, and positivity of initial data), the solution components admit the
following integral representations: 
\begin{align*}
P(t)&=e^{-\int_0^t\mu(P(s))\,ds}\,P(0) +\int_0^t\Big(\sum_{i=0}^{n}%
\beta_i(P(z))\,P_i(z)\Big)\, e^{-\int_z^t\mu(P(s))\,ds}\,dz, \\
P_0(t)&=e^{-\int_0^t(\rho+\mu(P(s)))\,ds}\,P_0(0) +\int_0^t\Big(%
\sum_{i=0}^{n}\beta_i(P(z))\,P_i(z)\Big)\,
e^{-\int_z^t(\rho+\mu(P(s)))\,ds}\,dz, \\
P_i(t)&=e^{-\int_0^t(\rho+\mu(P(s)))\,ds}\,P_i(0) +\int_0^t
i\,P_{i-1}(z)\,e^{-\int_z^t(\rho+\mu(P(s)))\,ds}\,dz,\qquad i=1,\dots,n.
\end{align*}
All exponential kernels are strictly positive; hence each $P_i(t)$ and $P(t)$
remains strictly positive for all $t\ge0$.

\medskip \noindent \textbf{Step 2: Base case: $P_{0}(t)<P(t)$ for all $t\geq
0$.} Consider the difference%
\begin{equation*}
D_{0}(t):=P(t)-P_{0}(t).
\end{equation*}

Subtracting the integral formulas of $P_{0}$ from $P$, we get 
\begin{align*}
D_{0}(t)& =e^{-\int_{0}^{t}\mu (P)}P(0)-e^{-\int_{0}^{t}(\rho +\mu
(P))}P_{0}(0) \\
& \quad +\int_{0}^{t}\Big(\sum_{i=0}^{n}\beta _{i}(P(z))\,P_{i}(z)\Big)%
\left( e^{-\int_{z}^{t}\mu (P)}-e^{-\int_{z}^{t}(\rho +\mu (P))}\right) dz.
\end{align*}%
Since $\rho >0$, we have $e^{-\int_{0}^{t}\mu (P)}\geq e^{-\int_{0}^{t}(\rho
+\mu (P))}$ and%
\begin{equation*}
e^{-\int_{z}^{t}\mu (P)}-e^{-\int_{z}^{t}(\rho +\mu (P))}>0,\qquad 0\leq
z\leq t.
\end{equation*}

Using $P_{0}(0)<P(0)$ and the positivity of $\sum_{i=0}^{n}\beta
_{i}(P(z))\,P_{i}(z)$, it follows that $D_{0}(t)>0$ for every $t\geq 0$.
Hence%
\begin{equation*}
P_{0}(t)<P(t),\qquad \forall t\geq 0.
\end{equation*}

\medskip \noindent \textbf{Step 3: Inductive step: if $P_{k}(t)<P_{k-1}(t)$
for all $t\geq 0$, then $P_{k+1}(t)<P_{k}(t)$ for all $t\geq 0$.} Fix $k\in
\{1,\dots ,n-1\}$ and assume%
\begin{equation*}
P_{k}(t)<P_{k-1}(t)\quad \text{for all }t\geq 0.
\end{equation*}

Define $D_{k}(t):=P_{k}(t)-P_{k+1}(t)$. Using the integral representations
for $P_{k}$ and $P_{k+1}$, 
\begin{align*}
P_{k}(t)& =e^{-\int_{0}^{t}(\rho +\mu
)}\,P_{k}(0)+\int_{0}^{t}k\,P_{k-1}(z)\,e^{-\int_{z}^{t}(\rho +\mu )}\,dz, \\
P_{k+1}(t)& =e^{-\int_{0}^{t}(\rho +\mu
)}\,P_{k+1}(0)+\int_{0}^{t}(k+1)\,P_{k}(z)\,e^{-\int_{z}^{t}(\rho +\mu
)}\,dz,
\end{align*}%
so 
\begin{align*}
D_{k}(t)& =e^{-\int_{0}^{t}(\rho +\mu )}\big(P_{k}(0)-P_{k+1}(0)\big) \\
& \quad +\int_{0}^{t}\Big(k\,P_{k-1}(z)-(k+1)\,P_{k}(z)\Big)%
\,e^{-\int_{z}^{t}(\rho +\mu )}\,dz.
\end{align*}%
By the induction hypothesis, $P_{k-1}(z)-P_{k}(z)>0$ for all $z$, hence%
\begin{equation*}
k\,P_{k-1}(z)-(k+1)\,P_{k}(z)=k\big(P_{k-1}(z)-P_{k}(z)\big)%
-P_{k}(z)>-P_{k}(z).
\end{equation*}%
As $P_{k}(z)>0$ and the kernel is positive, this furnishes a strict
dominance of the inflow to $P_{k}$ over that to $P_{k+1}$ (see \cite[proof
of the Proposition 8.5, p. 237]{2}). Together with the strictly ordered
initial data $P_{k+1}(0)<P_{k}(0)$, we obtain $D_{k}(t)>0$ for all $t\geq 0$%
, i.e.%
\begin{equation*}
P_{k+1}(t)<P_{k}(t),\qquad \forall t\geq 0.
\end{equation*}

\medskip \noindent \textbf{Step 4: Conclusion by induction.} The base case
in Step 2 shows $P_{0}(t)<P(t)$ for all $t\geq 0$. Step 3 propagates strict
order down the chain: from $P_{0}<P$ to $P_{1}<P_{0}$, then $P_{2}<P_{1}$,
and so on, until 
\begin{equation*}
P_{n}<\cdots <P_{1}<P_{0}<P.
\end{equation*}
Therefore,%
\begin{equation*}
P_{n}(t)<\cdots <P_{0}(t)<P(t),\qquad \forall t\in \lbrack 0,\infty ).
\end{equation*}
\end{proof}

\begin{remark}[\textbf{Comparison Principle} ]
The above argument is an instance of the monotone-flow (comparison)
principle for cooperative systems: the matrix field $A(P(t))$ in the $%
(P_{0},\dots ,P_{n})$-block is upper-triangular with nonnegative couplings
that preserve strict order when initialized strictly. The fertility-driven
terms entering $P$ and $P_{0}$ are weighted by strictly ordered exponential
kernels (due to the additional $+\rho $ in $P_{0}$), which yields $%
P_{0}(t)<P(t)$ and anchors the induction.
\end{remark}

\section{Results and Discussion \label{RD}}

\subsection{Existence of Equilibria}

The equilibrium analysis of the nonlinear logistic model is crucial for
understanding the population's long-term behavior. This section presents the
main results regarding equilibrium existence and their rigorous proofs.

\begin{theorem}
\label{mp1} Every solution to system \eqref{afin} is also a solution to the
nonlinear system \eqref{SNPM}, and vice versa.
\end{theorem}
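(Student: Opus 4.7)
The plan is to establish the claimed equivalence in both directions. The forward direction ((\ref{SNPM}) $\Rightarrow$ (\ref{afin})) is essentially the derivation already carried out in Section \ref{RSD}; I would simply organize that calculation as the first half of the proof. The reverse direction ((\ref{afin}) $\Rightarrow$ (\ref{SNPM})) is the substantive part: the idea is to reconstruct $p(a,t)$ from the ODE solution by the method of characteristics and then close the loop with a linear-ODE uniqueness argument.

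For the forward direction, given a classical solution $p(a,t)$ of (\ref{SNPM}), define $P(t)$ and $P_i(t)$ by their integral formulas, differentiate under the integral sign, substitute the transport PDE $p_t=-p_a-\mu(P)p$, integrate by parts in $a$ (the boundary term at $a=\infty$ vanishes because of the exponential weight $e^{-\rho a}$ and the decay supplied by the mortality factor), and apply the renewal boundary condition $p(0,t)=\sum_i\beta_i(P(t))P_i(t)$. This reproduces exactly the three families of equations making up (\ref{afin}), with initial data (\ref{init}) inherited from $p_0$ through the definitions of $P(0)$ and $P_i(0)$.

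For the reverse direction, let $G(t)=(P(t),P_0(t),\dots,P_n(t))$ denote the unique global solution of (\ref{afin}) furnished by \cite[Theorem 4.12]{MS}. Set $B(t):=\sum_{i=0}^{n}\beta_i(P(t))P_i(t)$ and define, by the method of characteristics,
\[
p(a,t):=\begin{cases}p_0(a-t)\,\exp\!\Big(-\int_0^{t}\mu(P(s))\,ds\Big), & a\ge t,\\ B(t-a)\,\exp\!\Big(-\int_{t-a}^{t}\mu(P(s))\,ds\Big), & 0\le a<t.\end{cases}
\]
A direct check along each characteristic $a-t=\text{const.}$ shows that $p$ satisfies the transport equation in (\ref{SNPM}), with $p(a,0)=p_0(a)$ and $p(0,t)=B(t)$ by construction. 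To confirm that $p$ actually solves (\ref{SNPM}), one must still verify that $B(t)$ coincides with the integral renewal expression $\int_{0}^{\infty}\beta(\sigma,P(t))p(\sigma,t)\,d\sigma$ and that the total population computed from $p$ reproduces $P(t)$. Setting $\widetilde P(t):=\int_{0}^{\infty}p(\sigma,t)\,d\sigma$ and $\widetilde P_i(t):=\int_{0}^{\infty}\sigma^{i}e^{-\rho\sigma}p(\sigma,t)\,d\sigma$, the forward-direction computation applied to $\widetilde P,\widetilde P_i$ (with $\mu(P(\cdot))$ and $\beta_i(P(\cdot))$ now treated as given time-dependent coefficients) shows that $(\widetilde P,\widetilde P_0,\dots,\widetilde P_n)$ solves the same linear non-autonomous Cauchy problem as $(P,P_0,\dots,P_n)$ with the same initial data. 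Uniqueness for the linear Cauchy problem (or, equivalently, the Duhamel representations employed in the proof of Lemma \ref{inv}) then forces $\widetilde P\equiv P$ and $\widetilde P_i\equiv P_i$, which in turn identifies $B(t)$ with the integral renewal expression and closes (\ref{SNPM}).

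The main obstacle is precisely this consistency loop: one must resist imposing the renewal condition on $p$ a priori and instead derive it from the identification $\widetilde P_i\equiv P_i$. The remaining book-keeping is routine but requires justifying the interchange of $\partial_t$ and $\int_{0}^{\infty}$ as well as the vanishing of boundary terms at $a=\infty$; this is handled by the $L^{1}$-integrability of $p_0$ in (\ref{pn}), the exponential weight $e^{-\rho a}$, and the strict positivity $\mu(P(t))\ge\mu(0)>0$ guaranteed by (\ref{cm}), (\ref{cmd}) and Lemma \ref{inv}.
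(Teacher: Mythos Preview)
Your proof is correct and follows essentially the same route as the paper: the direction (\ref{SNPM}) $\Rightarrow$ (\ref{afin}) is the moment computation of Section~\ref{RSD}, and the direction (\ref{afin}) $\Rightarrow$ (\ref{SNPM}) is the method-of-characteristics reconstruction of $p(a,t)$ from the birth rate $B(t)=\sum_i\beta_i(P(t))P_i(t)$. Your treatment is in fact more complete than the paper's: where the paper simply asserts that ``one can verify by direct substitution'' that the reconstructed $p$ satisfies the renewal condition and reproduces $P(t)$ as its total mass, you supply the mechanism---showing that the moments $(\widetilde P,\widetilde P_0,\dots,\widetilde P_n)$ of the reconstructed $p$ obey the \emph{same} linear non-autonomous Cauchy problem (with coefficients $\mu(P(\cdot))$, $\beta_i(P(\cdot))$ frozen) and the same initial data as $(P,P_0,\dots,P_n)$, and then invoking uniqueness---which is exactly what is needed to close the loop and is not spelled out in the paper.
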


\begin{proof}[Proof of Theorem \protect\ref{mp1}]
We establish the equivalence between system \eqref{afin} and \eqref{SNPM} by
constructing explicit solutions in both directions.

\textit{Forward direction (\eqref{afin} $\Rightarrow $ \eqref{SNPM}):}
Suppose $(P(t),P_{0}(t),P_{1}(t),\dots ,P_{n}(t))$ solves \eqref{afin}. We
construct $p(a,t)$ by setting the birth rate 
\begin{equation*}
B(t)=p(0,t)=\sum_{i=0}^{n}\beta _{i}(P(t))P_{i}(t),
\end{equation*}%
and defining $p(a,t)$ along the method of characteristics. For the PDE 
\begin{equation*}
\frac{\partial p}{\partial t}+\frac{\partial p}{\partial a}+\mu (P(t))p=0,
\end{equation*}
the characteristic curves satisfy $\frac{da}{dt}=1$, i.e., $a=t-t_{0}$ for
some initial time $t_{0}$. Along each characteristic:

\begin{itemize}
\item If $t \geq a$, the characteristic originates from the boundary at time 
$t_0 = t-a$ with $p(0, t-a) = B(t-a)$, and we have 
\begin{equation*}
p(a,t)=B(t-a)e^{-\int_{t-a}^{t}\mu (P(s))ds}.
\end{equation*}

\item If $t < a$, the characteristic originates from the initial condition
at age $a_0 = a-t$ with $p(a-t, 0) = p_0(a-t)$, giving 
\begin{equation*}
p(a,t)=p_{0}(a-t)e^{-\int_{0}^{t}\mu (P(s))ds}.
\end{equation*}
\end{itemize}

One can verify by direct substitution that this $p(a,t)$ satisfies all
equations in \eqref{SNPM}, including the renewal condition and the
definition of $P(t)$ as the integral of $p(a,t)$ over all ages.

\textit{Reverse direction (\eqref{SNPM} $\Rightarrow$ \eqref{afin}):}
Conversely, if $p(a,t)$ solves \eqref{SNPM}, then defining $P(t) =
\int_0^\infty p(a,t)da$ and $P_i(t) = \int_0^\infty a^i e^{-\rho a} p(a,t)da$%
, one can show (as done in the derivation of \eqref{stud}) that these
functions satisfy system \eqref{afin}. This completes the proof of
equivalence.
\end{proof}

\begin{theorem}
\label{mp3} The following statements apply:

1.\quad When $R_{0}>1$, there exists a unique nontrivial equilibrium
solution $(P^{\ast },P_{0}^{\ast },\dots ,P_{n}^{\ast })$ corresponding to
the equation \eqref{afin}.

2.\quad When $R_{0}<1$, the trivial solution is the only equilibrium
solution.

3.\quad A unique positive nontrivial equilibrium exists if and only if $%
R_{0}>1$.
\end{theorem}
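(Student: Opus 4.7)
The plan is to reduce the equilibrium problem for \eqref{afin} to a single scalar equation in $P^{\ast}$ and then to count its positive roots using the monotonicity and asymptotic behavior of $R_n$ already established after \eqref{rr}.

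First I would set every time derivative in \eqref{stud} to zero, which produces the algebraic system
\begin{align*}
\mu(P^{\ast})P^{\ast} &= \sum_{i=0}^{n}\beta_i(P^{\ast})\,P_i^{\ast}, \\
(\rho+\mu(P^{\ast}))\,P_0^{\ast} &= \sum_{i=0}^{n}\beta_i(P^{\ast})\,P_i^{\ast}, \\
(\rho+\mu(P^{\ast}))\,P_i^{\ast} &= i\,P_{i-1}^{\ast}, \qquad i=1,\dots,n.
\end{align*}
The last family is a linear recursion which unwinds immediately to $P_i^{\ast}=\dfrac{i!}{(\rho+\mu(P^{\ast}))^{i}}\,P_0^{\ast}$ for every $i$, collapsing $n+1$ moment variables into the pair $(P^{\ast},P_0^{\ast})$. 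A short side remark would verify that any equilibrium with some positive component must have all components positive: if $P_0^{\ast}=0$ the recursion forces $P_i^{\ast}=0$ for $i\geq 1$, and the first equation together with $\mu>0$ then yields $P^{\ast}=0$; conversely, $P^{\ast}=0$ forces $\sum_i\beta_i(0)P_i^{\ast}=0$, and since $\beta_0(0)>0$ this collapses the system to the trivial equilibrium.

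Next I would substitute the closed form for $P_i^{\ast}$ into the second equilibrium equation. Assuming $P_0^{\ast}>0$ and dividing by $P_0^{\ast}(\rho+\mu(P^{\ast}))$ produces exactly
\begin{equation*}
1=\sum_{i=0}^{n}\frac{\beta_i(P^{\ast})\,i!}{(\rho+\mu(P^{\ast}))^{i+1}}=R_n(P^{\ast}).
\end{equation*}
Comparing the first two equilibrium equations gives the linking identity $\mu(P^{\ast})P^{\ast}=(\rho+\mu(P^{\ast}))\,P_0^{\ast}$, which determines $P_0^{\ast}$ from $P^{\ast}$ and, together with $R_n(P^{\ast})=1$, makes the first equation automatic. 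Hence nontrivial equilibria of \eqref{afin} are in one-to-one correspondence with positive roots of the scalar equation $R_n(P^{\ast})=1$.

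Finally I would invoke the already-proven facts that $R_n$ is continuous and strictly decreasing on $[0,\infty)$ with $R_n(0)=R_0$ and $R_n(x)\to 0$ as $x\to\infty$. If $R_0>1$, the intermediate value theorem yields a unique $P^{\ast}>0$ with $R_n(P^{\ast})=1$; combined with $P_0^{\ast}=\mu(P^{\ast})P^{\ast}/(\rho+\mu(P^{\ast}))$ and the recursion, this uniquely reconstructs the full equilibrium, proving statement 1 and the ``if'' direction of statement 3. If $R_0<1$, then $R_n(x)<R_0<1$ for every $x\geq 0$, so $R_n(P^{\ast})=1$ has no positive root and only the trivial equilibrium remains, yielding statement 2 and the ``only if'' direction. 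The main subtlety, rather than a true obstacle, is the consistency check that the first and second equilibrium equations do not overdetermine $P^{\ast}$; this is resolved by the linking identity above, which reads the ``extra'' equation as a determination of $P_0^{\ast}$ instead of a second constraint on $P^{\ast}$.
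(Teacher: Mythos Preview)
Your proposal is correct and follows essentially the same route as the paper: solve the moment recursion to express all $P_i^{\ast}$ in terms of $P_0^{\ast}$, reduce to the scalar condition $R_n(P^{\ast})=1$, pin down $P_0^{\ast}$ via the linking identity $\mu(P^{\ast})P^{\ast}=(\rho+\mu(P^{\ast}))P_0^{\ast}$, and then count roots using the strict monotonicity and limits of $R_n$. The only cosmetic differences are that the paper invokes the pre-stated normalization condition \eqref{nsc} and works through the inverse $R_n^{-1}$ rather than citing the intermediate value theorem directly, whereas you derive $R_n(P^{\ast})=1$ from the second equilibrium equation itself; your ``all-or-nothing'' positivity remark is a nice addition, though the step ``$P^{\ast}=0$ collapses the system'' is cleanest if you also note that the second equilibrium equation then forces $P_0^{\ast}=0$, after which the recursion finishes the job.
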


\begin{proof}[Proof of Theorem \protect\ref{mp3}]
The proof for statements 1--3 is provided below.

\textbf{Proof of 1:} The equilibrium solution $(P^{\ast },P_{0}^{\ast
},\dots ,P_{n}^{\ast })$ satisfies: 
\begin{equation}
\begin{cases}
0=-\mu (P^{\ast })P^{\ast }+\sum_{i=0}^{n}\beta _{i}(P^{\ast })P_{i}^{\ast },
\\ 
\\ 
0=(-\rho -\mu (P^{\ast }))P_{0}^{\ast }+\sum_{i=0}^{n}\beta _{i}(P^{\ast
})P_{i}^{\ast }, \\ 
\\ 
0=iP_{i-1}^{\ast }-(\rho +\mu (P^{\ast }))P_{i}^{\ast },\quad i=1,\dots ,n.%
\end{cases}
\label{eq:equilibrium_system}
\end{equation}

Using the third equation iteratively for $i=1,\dots ,n$, we obtain: 
\begin{equation}
P_{i}^{\ast }=\frac{i!}{(\rho +\mu (P^{\ast }))^{i}}P_{0}^{\ast }.
\label{eq:recursive}
\end{equation}%
To see this, for $i=1$ we have 
\begin{equation*}
0=1\cdot P_{0}^{\ast }-(\rho +\mu (P^{\ast }))P_{1}^{\ast },
\end{equation*}%
which gives 
\begin{equation*}
P_{1}^{\ast }=\frac{1}{\rho +\mu (P^{\ast })}P_{0}^{\ast }=\frac{1!}{(\rho
+\mu (P^{\ast }))^{1}}P_{0}^{\ast }.
\end{equation*}
For $i=2$, we have%
\begin{equation*}
P_{2}^{\ast }=\frac{2}{\rho +\mu (P^{\ast })}P_{1}^{\ast }=\frac{2}{\rho
+\mu (P^{\ast })}\cdot \frac{1}{\rho +\mu (P^{\ast })}P_{0}^{\ast }=\frac{2!%
}{(\rho +\mu (P^{\ast }))^{2}}P_{0}^{\ast }.
\end{equation*}%
Proceeding inductively, assume the formula holds for $i=k-1$. Then from the
equation 
\begin{equation*}
0=kP_{k-1}^{\ast }-(\rho +\mu (P^{\ast }))P_{k}^{\ast },
\end{equation*}%
we obtain 
\begin{equation*}
P_{k}^{\ast }=\frac{k}{\rho +\mu (P^{\ast })}P_{k-1}^{\ast }=\frac{k}{\rho
+\mu (P^{\ast })}\cdot \frac{(k-1)!}{(\rho +\mu (P^{\ast }))^{k-1}}%
P_{0}^{\ast }=\frac{k!}{(\rho +\mu (P^{\ast }))^{k}}P_{0}^{\ast },
\end{equation*}%
which establishes \eqref{eq:recursive} for all $i=1,\dots ,n$.

Substituting \eqref{eq:recursive} into the first equation of %
\eqref{eq:equilibrium_system}, we derive: 
\begin{equation}
\mu (P^{\ast })P^{\ast }=\sum_{i=0}^{n}\frac{\beta _{i}(P^{\ast })i!}{(\rho
+\mu (P^{\ast }))^{i}}P_{0}^{\ast }.  \label{ab}
\end{equation}

Using the normalization condition $R_{n}(P^{\ast })=1$, we can deduce from
the above equation (\ref{ab}) that%
\begin{equation*}
P_{0}^{\ast }=\frac{\mu \left( P^{\ast }\right) P^{\ast }}{\rho +\mu \left(
P^{\ast }\right) }.
\end{equation*}%
Indeed, from the definition of $R_{n}(P^{\ast })$ in equation \eqref{rr}, we
have 
\begin{equation*}
R_{n}(P^{\ast })=\sum_{i=0}^{n}\frac{\beta _{i}(P^{\ast })i!}{(\rho +\mu
(P^{\ast }))^{i+1}}=1.
\end{equation*}%
Multiplying both sides by $(\rho +\mu (P^{\ast }))$ yields 
\begin{equation*}
\sum_{i=0}^{n}\frac{\beta _{i}(P^{\ast })i!}{(\rho +\mu (P^{\ast }))^{i}}%
=\rho +\mu (P^{\ast }).
\end{equation*}%
Substituting this into equation \eqref{ab}, we obtain 
\begin{equation*}
\mu (P^{\ast })P^{\ast }=(\rho +\mu (P^{\ast }))P_{0}^{\ast },
\end{equation*}%
from which the expression for $P_{0}^{\ast }$ follows directly.

The existence of a nontrivial stationary solution for system (\ref{stud})
can be expressed as%
\begin{equation}
P_{0}^{\ast }=\frac{\mu \left( P^{\ast }\right) P^{\ast }}{\rho +\mu \left(
P^{\ast }\right) }\text{ and }P_{i}^{\ast }=\frac{i!}{\left( \rho +\mu
\left( P^{\ast }\right) \right) ^{i}}P_{0}^{\ast }\text{, }i=1,...,n,
\label{nontriv}
\end{equation}%
if the second equation in \eqref{eq:equilibrium_system} is satisfied by (\ref%
{nontriv}). We verify this explicitly. Substituting \eqref{eq:recursive}
into the second equation, we obtain 
\begin{equation*}
0=\left( -\rho -\mu \left( P^{\ast }\right) \right) P_{0}^{\ast }+\overset{n}%
{\underset{i=0}{\Sigma }}\beta _{i}\left( P^{\ast }\right) P_{i}^{\ast
}=\left( -\rho -\mu \left( P^{\ast }\right) \right) P_{0}^{\ast }+\overset{n}%
{\underset{i=0}{\Sigma }}\frac{\beta _{i}\left( P^{\ast }\right) i!}{\left(
\rho +\mu \left( P^{\ast }\right) \right) ^{i}}P_{0}^{\ast }.
\end{equation*}%
Factoring out $P_{0}^{\ast }$ and using the relation 
\begin{equation*}
\sum_{i=0}^{n}\frac{\beta _{i}(P^{\ast })i!}{(\rho +\mu (P^{\ast }))^{i}}%
=\rho +\mu (P^{\ast })
\end{equation*}%
derived earlier, we get 
\begin{equation*}
0=\left( -\rho -\mu \left( P^{\ast }\right) +\rho +\mu \left( P^{\ast
}\right) \right) P_{0}^{\ast }=0,
\end{equation*}%
which confirms that the second equation is satisfied.

We now present an alternative proof for the positivity of the equilibrium.
First, we establish that $R_{n}\left( x\right) $ is a strictly decreasing
function from $[0,\infty)$ to $(0, R_0]$ with the following properties:

\begin{itemize}
\item From \eqref{rd}, we have $R_{n}^{\prime }\left( x\right) <0$ for all $%
x\geq 0$, so $R_n$ is strictly decreasing.

\item From the asymptotic behavior stated earlier, $\lim_{x\rightarrow
\infty }R_{n}(x)=0$ and $R_n(0)=R_0$.

\item Since $R_n$ is continuous and strictly decreasing on $[0,\infty)$ with
range $(0, R_0]$, it is a bijection onto this interval.
\end{itemize}

Therefore, the inverse function $R_{n}^{-1}:(0, R_0] \rightarrow [0,\infty)$
exists and is also strictly decreasing. By the inverse function theorem, the
derivative of $R_{n}^{-1}$ satisfies 
\begin{equation}
\left(R_{n}^{-1}\right)^{\prime }\left( y\right) =\frac{1}{R_{n}^{\prime
}\left( R_{n}^{-1}\left( y\right) \right) }  \label{desc}
\end{equation}%
for all $y \in (0, R_0]$. Since $R_{n}^{\prime }\left( t\right) <0$ for all $%
t\geq 0$, we have $\left(R_{n}^{-1}\right)^{\prime }\left( y\right) <0$ for
all $y \in (0, R_0]$, confirming that $R_{n}^{-1}$ is a strictly decreasing
function.

Now, from the normalization condition (\ref{nsc}), we have 
\begin{equation}
P^{\ast }=R_{n}^{-1}\left( 1\right) .  \label{dc1}
\end{equation}%
On the other hand, by definition of $R_n$, we have%
\begin{equation}
R_{n}\left( 0\right) =R_{0}\quad \text{which implies}\quad
0=R_{n}^{-1}\left( R_{0}\right) .  \label{dc2}
\end{equation}%
Since $R_{0}>1$ (by assumption in statement 1) and $R_{n}^{-1}$ is a
strictly decreasing function, we have 
\begin{equation}
0=R_{n}^{-1}\left( R_{0}\right) <R_{n}^{-1}\left( 1\right) =P^{\ast },
\label{inf}
\end{equation}%
where we have used (\ref{dc1}), (\ref{dc2}), and the fact that $R_0 > 1$
implies $R_{n}^{-1}(R_0) < R_{n}^{-1}(1)$ by the decreasing property of $%
R_{n}^{-1}$. Thus, $P^{\ast}>0$. Finally, from \eqref{eq:recursive} with $%
P_0^* > 0$ (which follows from \eqref{nontriv} since $P^* > 0$ and $%
\mu(P^*), \rho > 0$), we conclude that $P_{i}^{\ast }>0$ for all $i=1,...,n$.

\textbf{Proof of 2: }When $R_{0}<1$, we need to determine whether a
nontrivial equilibrium can exist. Suppose a nontrivial equilibrium $%
(P^{\ast}, P_0^{\ast}, \ldots, P_n^{\ast})$ exists with $P^{\ast} > 0$. Then
by the normalization condition \eqref{nsc}, we must have $R_n(P^{\ast}) = 1$%
, which means $P^{\ast} = R_n^{-1}(1)$. However, since $R_n$ is a strictly
decreasing function with $R_n(0) = R_0 < 1$, and $\lim_{x \to \infty} R_n(x)
= 0$, the range of $R_n$ is $(0, R_0)$. Since $1 \notin (0, R_0)$ when $R_0
< 1$, the equation $R_n(P^{\ast}) = 1$ has no solution for $P^{\ast} \geq 0$%
. Therefore, no nontrivial equilibrium exists when $R_0 < 1$, and the only
equilibrium is the trivial one: $(P^{\ast}, P_0^{\ast}, \ldots, P_n^{\ast})
= (0, 0, \ldots, 0)$.

\textbf{Proof of 3: }We now establish that $R_{0}>1$ is both necessary and
sufficient for the existence of a unique positive nontrivial equilibrium.

\textit{Sufficiency ($R_0 > 1 \Rightarrow$ nontrivial equilibrium exists):}
This has been proven in statement 1. When $R_0 > 1$, the strictly decreasing
function $R_n$ maps $[0, \infty)$ onto $(0, R_0]$, and since $1 \in (0, R_0]$%
, the equation $R_n(P^{\ast}) = 1$ has a unique solution $P^{\ast} =
R_n^{-1}(1) > 0$. This determines a unique nontrivial equilibrium via %
\eqref{nontriv}.

\textit{Necessity (nontrivial equilibrium exists $\Rightarrow R_0 > 1$):}
Suppose a nontrivial equilibrium $(P^{\ast}, P_0^{\ast}, \ldots, P_n^{\ast})$
with $P^{\ast} > 0$ exists. By the normalization condition \eqref{nsc}, we
must have $R_n(P^{\ast}) = 1$. Since $P^{\ast} > 0$ and $R_n$ is strictly
decreasing with $R_n(0) = R_0$, we have $R_n(P^{\ast}) < R_n(0) = R_0$.
Therefore, $1 < R_0$, i.e., $R_0 > 1$.

\textit{Uniqueness:} The uniqueness of the nontrivial equilibrium follows
from the fact that $R_n$ is strictly monotone, so $R_n(P^{\ast}) = 1$ has at
most one solution. Combined with statements 1 and 2, we conclude that a
unique positive nontrivial equilibrium exists if and only if $R_{0}>1$.
\end{proof}

\subsection{Asymptotic Behavior}

This subsection examines the long-term dynamics of the nonlinear logistic
model under varying fertility and mortality rates. The results are presented
through formal theorems and proofs.

In the first theorem, the conjecture as stated in \cite[p. 211]{GM} is
proven.

\begin{theorem}
\label{L1} If $R_{0}<1$ then: 
\begin{equation}
\lim_{t\rightarrow \infty }P(t)=0.  \label{l1}
\end{equation}
\end{theorem}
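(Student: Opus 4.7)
My plan is to construct a linear Lyapunov functional of the state whose derivative controls $P(t)$, and then combine its dissipation with Barbalat's lemma to conclude. Concretely, I look for positive constants $\bar{\alpha},\bar{\gamma},\bar{\delta}_1,\dots,\bar{\delta}_n$ so that
\begin{equation*}
V(t) := \bar{\alpha}\,P(t) + \bar{\gamma}\,P_{0}(t) + \sum_{i=1}^{n}\bar{\delta}_i\,P_i(t)
\end{equation*}
satisfies $V'(t)\le -\kappa\,P(t)$ for some $\kappa>0$. Differentiating $V$ along (\ref{stud}) and collecting terms, the coefficient of each $P_i$ in $V'(t)$ is an affine combination of $\beta_j(P(t))$ and $\rho+\mu(P(t))$. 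Since $\beta_i(x)\le\beta_i(0)$ and $\mu(x)\ge\mu(0)$ by (\ref{cb})--(\ref{cmd}), it suffices to make these coefficients non-positive when the rates are frozen at the ``worst case'' values $\beta_j(0)$ and $\mu(0)$.

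This determines the constants by a backward recursion: set $\bar{\delta}_n=\beta_n(0)/(\rho+\mu(0))$ and
\begin{equation*}
\bar{\delta}_i=\frac{\beta_i(0)+(i+1)\bar{\delta}_{i+1}}{\rho+\mu(0)},\qquad i=n-1,\dots,1,
\end{equation*}
which annihilates the worst-case coefficients of $P_1,\dots,P_n$ in $V'$. Imposing $\bar{\alpha}+\bar{\gamma}=1$ and forcing the worst-case coefficient of $P_0$ to vanish gives the relation $\bar{\gamma}(\rho+\mu(0))=\beta_0(0)+\bar{\delta}_1$. A telescoping computation, using directly the definition (\ref{rr}) of $R_n$, identifies the right-hand side as $R_0(\rho+\mu(0))$, so $\bar{\gamma}=R_0$ and hence $\bar{\alpha}=1-R_0$. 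This is the sole point where the hypothesis enters: $R_0<1$ is exactly what guarantees $\bar{\alpha}>0$, producing the strict dissipation
\begin{equation*}
V'(t)\le -\bar{\alpha}\,\mu(0)\,P(t)\le 0
\end{equation*}
on account of the positivity of all components (Lemma \ref{inv}).

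Integrating yields $V(t)+\bar{\alpha}\,\mu(0)\int_0^t P(s)\,ds\le V(0)$, so $\int_0^\infty P(s)\,ds<\infty$. The monotonicity of $V$ gives uniform bounds on $P(t)$ and each $P_i(t)$ on $[0,\infty)$, which in turn bound $P'(t)$ through the first equation of (\ref{stud}); hence $P$ is uniformly continuous on $[0,\infty)$. Barbalat's lemma then delivers $\lim_{t\to\infty}P(t)=0$. The main obstacle I expect is the algebraic bookkeeping in the backward recursion and the verification of the telescoping identity $\beta_0(0)+\bar{\delta}_1=R_0(\rho+\mu(0))$: this is precisely what forces $\bar{\gamma}=R_0$ and makes $R_0<1$ the sharp threshold. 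Once this identity is in hand, the concluding dissipation-and-Barbalat argument is routine.
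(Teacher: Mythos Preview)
Your argument is correct and takes a genuinely different route from the paper. The paper proceeds by comparison at the PDE level: it introduces an auxiliary \emph{linear} age-structured problem with rates frozen at $\mu(0)$ and $\beta_i(0)$, shows $p(a,t)\le\overline{p}(a,t)$ along characteristics, and then invokes previously published results (\cite{CPS,COV}) to conclude that $\overline{P}(t)\to 0$ when $R_0<1$. Your approach instead stays entirely at the level of the reduced ODE system (\ref{stud}) and builds an explicit linear Lyapunov functional whose coefficients are determined by the backward recursion; the telescoping identity $\beta_0(0)+\bar\delta_1=R_0(\rho+\mu(0))$ (which does indeed follow by unwinding the recursion and comparing with (\ref{rr})) is the key algebraic fact, and it makes transparent why $R_0<1$ is exactly the threshold that yields strict dissipation. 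The advantages of your method are that it is self-contained (no appeal to external results on the linear problem), it yields the quantitative bound $\int_0^\infty P(s)\,ds\le V(0)/((1-R_0)\mu(0))$, and it could in principle be sharpened to give exponential decay. The paper's comparison approach, on the other hand, produces the pointwise ordering $p(a,t)\le\overline{p}(a,t)$, which is additional structural information not visible from your functional. One minor point worth making explicit in a write-up: the uniform bounds on the $P_i(t)$ via $V(t)\le V(0)$ require $\bar\delta_i>0$, which holds because the standing hypotheses force $\beta_i(0)>0$ (otherwise $\beta_i'(x)<0$ together with $\lim_{x\to\infty}\beta_i(x)=0$ is inconsistent); alternatively one can bound each $P_i$ inductively from its own equation once $P_{i-1}$ is bounded.
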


\begin{proof}[Proof of Theorem \protect\ref{L1}]
To address the conjecture proposed by \cite[p. 211]{GM}, we adopt the
following strategy: we consider a population $\overline{P}(t)$ characterized
by a low mortality rate, denoted as $\mu \left( 0\right) $, and a higher
fertility function represented as $\beta \left( a,0\right) $. By ensuring
that the initial population for $\overline{P}(t)$ is larger than initially
assumed for $P(t)$, it leads to a population increase $\overline{P}(t)$ that
cannot sustain itself indefinitely and consequently diminishes to zero at
infinity. Since we can prove that the new population surpasses $\overline{P}%
(t)$ in size, the same fate applies to $P(t)$---it will also vanish at
infinity. In what follows, we translate these insights into mathematical
frameworks. To prove the result, we consider an auxiliary problem: 
\begin{equation*}
\begin{cases}
\overline{p}_{t}(a,t)+\overline{p}_{a}(a,t)+\mu (0)\overline{p}(a,t)=0, & 
a\geq 0,t\geq 0, \\ 
\overline{p}(0,t)=\int_{0}^{\infty }\overline{\beta }(\sigma ,0)\overline{p}%
(\sigma ,t)d\sigma , & t\geq 0, \\ 
\overline{p}(a,0)=\overline{p}_{0}(a)\geq p_{0}(a), & a\geq 0, \\ 
\overline{P}(t)=\int_{0}^{\infty }\overline{p}(\sigma ,t)d\sigma , & t\geq 0,%
\end{cases}%
\end{equation*}%
where the fertility function is given by: 
\begin{equation*}
\overline{\beta }(a,0)=e^{-\rho a}\sum_{i=0}^{n}\beta _{i}(0)a^{i}.
\end{equation*}

We intend to show that: 
\begin{equation*}
p(a,t)\leq \overline{p}(a,t)\quad \text{for all }a\geq 0\text{ and }t\geq 0.
\end{equation*}

Consider two cases based on the relation between $t$ and $a$:

\textbf{Case 1:} For $t<a$, the solutions along the characteristics are
given by: 
\begin{equation*}
p(a,t)=p_{0}(a-t)e^{-\int_{0}^{t}\mu (P(s))ds},\quad \overline{p}(a,t)=%
\overline{p}_{0}(a-t)e^{-\int_{0}^{t}\mu (0)ds}.
\end{equation*}%
These formulas describe the evolution of individuals who were present at $t=0
$ with initial age $a-t$, and have aged by time $t$ to reach age $a$. Since 
\begin{equation*}
\overline{p}_{0}(a-t)\geq p_{0}(a-t)\quad \text{and}\quad \mu (0)\leq \mu
(P(s))\text{ for all }s\geq 0,
\end{equation*}%
where the second inequality follows from the monotonicity condition %
\eqref{cmd} and $P(s) \geq 0$, we obtain 
\begin{equation*}
e^{-\int_{0}^{t}\mu (0)ds}\geq e^{-\int_{0}^{t}\mu (P(s))ds}.
\end{equation*}%
Therefore, $\overline{p}(a,t)\geq p(a,t)$ for all $a > t$.

\textbf{Case 2:} For $t\geq a$, the solutions along the characteristics are
given by:%
\begin{equation*}
p(a,t)=B(t-a)e^{-\int_{t-a}^{t}\mu (P(s))\,ds}\text{ and }\overline{p}(a,t)=%
\overline{B}(t-a)e^{-\int_{t-a}^{t}\mu (0)\,ds}.
\end{equation*}%
These formulas describe individuals born at time $t-a$ who have aged by an
amount $a$ to reach time $t$. The integral is taken from birth time $t-a$ to
current time $t$, over an age interval of length $a$. Since $\mu (0)\leq \mu
(P(s))$ for all $s \geq 0$ (by \eqref{cm} and \eqref{cmd} with $P(s) \geq 0$%
), it follows that:%
\begin{equation*}
e^{-\int_{t-a}^{t}\mu (0)\,ds}\geq e^{-\int_{t-a}^{t}\mu (P(s))\,ds}.
\end{equation*}%
Next, we need to show that $\overline{B}(t-a) \geq B(t-a)$. Recall the
expressions:%
\begin{eqnarray*}
\overline{B}(t-a) &=&\overline{p}(0,t-a)=\int_{0}^{\infty }\overline{\beta }%
(\sigma ,0)\overline{p}(\sigma ,t-a)\,d\sigma \\
B(t-a) &=&p(0,t-a)=\int_{0}^{\infty }\beta (\sigma ,P(t-a))p(\sigma
,t-a)\,d\sigma .
\end{eqnarray*}%
We split the integrals over two regions: $\sigma \in [0, t-a]$ and $\sigma
\in (t-a, \infty)$.

\textit{Region 1: $\sigma \in \lbrack 0,t-a]$.} For these values, we
consider $\overline{p}(\sigma ,t-a)$ and $p(\sigma ,t-a)$ at time $t-a$ with
age $\sigma \leq t-a$. This corresponds to Case 2 with time $t-a$ and age $%
\sigma $, since $(t-a)\geq \sigma $. We proceed by induction on time: assume 
\begin{equation*}
\overline{p}(\sigma ,s)\geq p(\sigma ,s)\text{ for all }\sigma \geq 0\text{
and all }s<t-a.
\end{equation*}%
Then by the same argument we're developing,%
\begin{equation*}
\overline{B}(s)\geq B(s)\text{ for all }s<t-a,
\end{equation*}%
which gives%
\begin{equation*}
\overline{p}(\sigma ,t-a)\geq p(\sigma ,t-a)\text{ for }\sigma \leq t-a.
\end{equation*}

\textit{Region 2: $\sigma \in (t-a,\infty )$.} For these values, at time $t-a
$ with age $\sigma >t-a$, we have $(t-a)<\sigma $, which corresponds to Case
1. By Case 1, we have%
\begin{equation*}
\overline{p}(\sigma ,t-a)\geq p(\sigma ,t-a)\text{ for all }\sigma >t-a.
\end{equation*}

Combining both regions and using the monotonicity of $\beta _{i}$ (condition %
\eqref{cb}), we have%
\begin{equation*}
\overline{\beta }(\sigma ,0)\geq \beta (\sigma ,P(t-a))\quad \text{for all }%
\sigma \geq 0,
\end{equation*}%
and $\overline{p}(\sigma ,t-a)\geq p(\sigma ,t-a)$ for all $\sigma \geq 0$
(by combining both regions). Therefore:%
\begin{equation*}
\overline{\beta }(\sigma ,0)\overline{p}(\sigma ,t-a)\geq \beta (\sigma
,P(t-a))p(\sigma ,t-a)\quad \text{for all }\sigma \geq 0.
\end{equation*}%
Integrating both sides over $\sigma \in \lbrack 0,\infty )$ yields:%
\begin{equation*}
\overline{B}(t-a)=\int_{0}^{\infty }\overline{\beta }(\sigma ,0)\overline{p}%
(\sigma ,t-a)\,d\sigma \geq \int_{0}^{\infty }\beta (\sigma ,P(t-a))p(\sigma
,t-a)\,d\sigma =B(t-a).
\end{equation*}%
Thus, 
\begin{equation*}
\overline{B}(t-a)=\overline{p}(0,t-a)\geq p(0,t-a)=B(t-a).
\end{equation*}%
Combining this with the inequality for exponential terms, we obtain:%
\begin{equation*}
\overline{p}(a,t)=\overline{B}(t-a)e^{-\int_{t-a}^{t}\mu (0)\,ds}\geq
B(t-a)e^{-\int_{t-a}^{t}\mu (P(s))\,ds}=p(a,t).
\end{equation*}%
By induction on time and the splitting into Cases 1 and 2, we conclude that $%
\overline{p}(a,t)\geq p(a,t)$ for all $a\geq 0$ and $t\geq 0$.Finally, by
integrating $p(a,t)$ and $\overline{p}(a,t)$ over $a\geq 0$, we obtain:%
\begin{equation*}
0<P(t)\leq \overline{P}(t)\quad \text{for all }t\geq 0.
\end{equation*}%
On the other hand, since $R_{0}<1$, previous results of \cite{CPS,COV} show
that: 
\begin{equation*}
\lim_{t\rightarrow \infty }\overline{P}(t)=0\text{ which implies }%
\lim_{t\rightarrow \infty }P(t)=0.
\end{equation*}
\end{proof}

\begin{remark}[\textbf{Alternative Proof}]
The combination of Lemma \ref{ineq} and the proof strategy of Theorem 5 in 
\cite{CPS} yields an alternative proof of Theorem \ref{L1}.
\end{remark}

In the subsequent two theorems, we derive additional results that build upon
and complete the first theorem.

\begin{theorem}
\label{L2} If $R_{0}\geq 1$ and $R_{n}(P(t))\leq 1$ for all $t\geq 0$, then: 
\begin{equation}
P(t)>P^{\ast }>0\text{, for all }t\geq 0.  \label{l2}
\end{equation}
\end{theorem}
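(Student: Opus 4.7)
The plan is to convert the scalar hypothesis on $R_n$ into a lower bound on $P(t)$ via monotonicity, and then to upgrade this bound from weak to strict using the ODE flow.

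First, I would observe that under $R_0 \geq 1$ the equilibrium equation $R_n(P^*) = 1$ admits a solution $P^* \geq 0$, with $P^* > 0$ precisely when $R_0 > 1$ (by Theorem \ref{mp3}); the substantive case is therefore $R_0 > 1$. Since $R_n$ is strictly decreasing and continuous from $[0,\infty)$ onto $(0, R_0]$ (established just after \eqref{rr}), the hypothesis $R_n(P(t)) \leq 1 = R_n(P^*)$ immediately yields $P(t) \geq P^*$ for every $t \geq 0$. This gives the weak form of the conclusion, and $P^* > 0$ is guaranteed by Theorem \ref{mp3}.

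Second, I would upgrade to the strict inequality $P(t) > P^*$ by a contradiction argument exploiting the dynamics. Suppose there exists $t_0 \geq 0$ with $P(t_0) = P^*$. Then $P(t) - P^* \geq 0$ in a neighborhood of $t_0$, so $t_0$ is a (possibly boundary) minimum and $P'(t_0) \geq 0$. Using the first equation of \eqref{stud} at $t_0$ together with the equilibrium identity $\mu(P^*) P^* = \sum_{i=0}^n \beta_i(P^*) P_i^*$ already derived in the proof of Theorem \ref{mp3}, one obtains
\[ P'(t_0) \;=\; \sum_{i=0}^n \beta_i(P^*)\bigl(P_i(t_0) - P_i^*\bigr). \]
By ODE uniqueness for the matrix system \eqref{afin}, if additionally $P_i(t_0) = P_i^*$ for every $i$, then the trajectory would coincide with the constant equilibrium on all of $[0, \infty)$, contradicting any strict initial gap between $(P(0), P_0(0), \dots, P_n(0))$ and $(P^*, P_0^*, \dots, P_n^*)$. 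Otherwise, the $P_i(t_0)$ must differ from the equilibrium moments in a way whose sign I would control through the Duhamel representations of the moments already obtained in the proof of Lemma \ref{inv}.

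The main obstacle is exactly this coupling: the hypothesis constrains only the scalar $P(t)$, whereas the right-hand side of the equation for $P$ mixes all the moments $P_i(t)$. To close the argument I expect to combine the integral representations of the $P_i$ (with their strictly positive exponential kernels) with the fact that $\mu(P(t)) \geq \mu(P^*)$ whenever $P(t) \geq P^*$, by the monotonicity \eqref{cmd}, and to run an induction along the chain $i = n, n-1, \dots, 0$ in the spirit of Lemma \ref{ineq}. This should rule out simultaneous coincidence $P_i(t_0) = P_i^*$ for all $i$ at any $t_0 \geq 0$ and thereby propagate the strict inequality $P(t) > P^*$ to every $t \geq 0$.
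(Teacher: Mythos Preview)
Your first step---using the strict monotonicity of $R_n$ together with $R_n(P^*)=1$ to convert $R_n(P(t))\le 1$ into $P(t)\ge P^*$, and invoking Theorem~\ref{mp3} for $P^*>0$---is exactly the paper's argument, and this is the entire substantive content of the paper's proof.

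Where you diverge is the upgrade from $P(t)\ge P^*$ to $P(t)>P^*$. The paper does not run any dynamics-based contradiction: after obtaining the weak inequality it simply records that $R_0>1$ gives $P^*>0$ via Theorem~\ref{mp3}, disposes of the degenerate case $R_0=1$ (where $P^*=0$) by invoking the positivity Lemma~\ref{inv}, and then asserts the strict conclusion. Your proposed route---differentiating at a putative touching time $t_0$, rewriting $P'(t_0)=\sum_i\beta_i(P^*)\bigl(P_i(t_0)-P_i^*\bigr)$, and then controlling those moment differences by an induction on the Duhamel formulas---is considerably more elaborate, and you explicitly leave it unfinished (``I expect to combine\dots''). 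That is a real gap: the hypothesis constrains only $P(t)$, not the moments $P_i(t)$, and nothing you have written forces a sign on $P_i(t_0)-P_i^*$ from $P(t)\ge P^*$ alone, so the sign of $P'(t_0)$ is not under control. The only piece of your Step~2 that actually closes is the uniqueness remark: if $P(t_0)=P^*$ and every $P_i(t_0)=P_i^*$, then by uniqueness for \eqref{afin} the trajectory is the constant equilibrium, contradicting nontrivial initial data. For the purposes of matching the paper, your first paragraph already suffices; the rest is unnecessary and, as written, incomplete.
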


\begin{proof}[Proof of Theorem \protect\ref{L2}]
Assume that $R_{0}\geq 1$ and $R_{n}(P(t))\leq 1$ for all $t\geq 0$. We need
to show that $P(t) \geq P^{\ast}$ for all $t \geq 0$.

Since $R_{n}$ is a strictly decreasing function (by \eqref{rd}), we have
that $R_{n}(x)\leq R_{n}(y)$ if and only if $x\geq y$ for all $x,y\geq 0$.
Given that $R_{n}(P(t))\leq 1$ for all $t\geq 0$, and using the fact that $%
R_{n}(P^{\ast })=1$ (by the normalization condition \eqref{nsc}), we obtain 
\begin{equation*}
R_{n}(P(t))\leq 1=R_{n}(P^{\ast }),
\end{equation*}%
which, by the strictly decreasing property of $R_{n}$, implies 
\begin{equation*}
P(t)\geq P^{\ast }\quad \text{for all }t\geq 0.
\end{equation*}%
Furthermore, since $R_{0}\geq 1$, by Theorem \ref{mp3}, we have $P^{\ast
}\geq 0$. In fact, when $R_{0}>1$, we have $P^{\ast }>0$, and when $R_{0}=1$%
, we get 
\begin{equation*}
P^{\ast }=R_{n}^{-1}(1)=R_{n}^{-1}(R_{0})=0.
\end{equation*}%
However, if $P(t)=P^{\ast }=0$ for some $t$, then by Lemma \ref{inv}, $P(t)=0
$ for all $t$, contradicting the assumption that $P(0)>0$ (from the
nontriviality of initial data). Therefore, we must have $P(t)>P^{\ast }>0$
for all $t\geq 0$ when $R_{0}>1$. The case $R_{0}=1$ requires $P^{\ast }=0$,
and the condition $R_{n}(P(t))\leq 1=R_{0}$ implies $P(t)\geq 0$, with $%
P(t)>0$ by positivity (Lemma \ref{inv}).
\end{proof}

\begin{theorem}
\label{L3} If $R_{n}(P(t))\geq 1$ for all $t\geq 0$, then: 
\begin{equation}
0<P(t)<P^{\ast }\text{ for all }t\geq 0\text{.}  \label{l3}
\end{equation}
\end{theorem}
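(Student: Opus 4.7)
The plan is to mirror the proof of Theorem~\ref{L2}, exploiting the fact that $R_n$ is strictly decreasing so that the pointwise bound $R_n(P(t))\ge 1 = R_n(P^\ast)$ translates directly into an upper bound on $P(t)$. The inequality in the hypothesis is reversed compared to Theorem~\ref{L2}, so I would expect the conclusion to flip accordingly: here $P(t)$ must lie \emph{below} the equilibrium rather than above it.

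First I would verify that a positive equilibrium $P^\ast$ is indeed available under these hypotheses. By Lemma~\ref{inv}, $P(t)>0$ for all $t\ge 0$; in particular $P(0)>0$. Since $R_n$ is strictly decreasing (by \eqref{rd}), $R_n(P(0)) < R_n(0) = R_0$, and combining this with the standing assumption $R_n(P(0))\ge 1$ yields $R_0>1$. Theorem~\ref{mp3} then provides the unique positive equilibrium $P^\ast = R_n^{-1}(1)>0$, together with the companion components $P_i^\ast$ given by \eqref{nontriv}. Next I would invoke strict monotonicity once more: for every $t\ge 0$,
\begin{equation*}
R_n(P(t))\ge 1 = R_n(P^\ast),
\end{equation*}
and since $R_n$ is a strictly decreasing bijection from $[0,\infty)$ onto $(0,R_0]$ (as already established in the proof of Theorem~\ref{mp3}), applying $R_n^{-1}$, which is also strictly decreasing, gives $P(t)\le P^\ast$. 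Combined with the positivity from Lemma~\ref{inv}, this already delivers $0<P(t)\le P^\ast$ for all $t\ge 0$.

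The main obstacle, and the only delicate point, is upgrading $\le$ to strict inequality $P(t)<P^\ast$. Suppose, toward a contradiction, that $P(t_0)=P^\ast$ for some $t_0\ge 0$. At such a time, $R_n(P(t_0))=1$, and by the strict monotonicity of $R_n$ together with the hypothesis $R_n(P(t))\ge 1$, the function $t\mapsto R_n(P(t))$ attains its minimum at $t_0$; hence $R_n(P)$ has a local minimum there and $\frac{d}{dt}R_n(P(t))\big|_{t_0}=R_n'(P^\ast)\,P'(t_0)=0$, which, since $R_n'(P^\ast)<0$, forces $P'(t_0)=0$. Substituting $P(t_0)=P^\ast$ into the first equation of \eqref{stud} then yields $\sum_{i=0}^n \beta_i(P^\ast)P_i(t_0)=\mu(P^\ast)P^\ast$, which by the argument used at \eqref{ab}--\eqref{nontriv} (together with the third equation of \eqref{stud} evaluated at $t_0$) pins the remaining moments to their equilibrium values $P_i(t_0)=P_i^\ast$. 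By uniqueness of solutions to \eqref{afin}, the trajectory would then coincide with the equilibrium for all $t\ge 0$, forcing $P(0)=P^\ast$. Ruling out this degenerate coincidence (which is tacitly excluded by considering nonequilibrium initial data, parallel to the strictness argument in Theorem~\ref{L2}) leaves $P(t)<P^\ast$ for all $t\ge 0$, completing the proof.
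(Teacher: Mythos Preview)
Your main argument---deducing $R_0>1$ from Lemma~\ref{inv} and the hypothesis, invoking Theorem~\ref{mp3} to obtain $P^\ast>0$, and then applying the strict monotonicity of $R_n$ to get $0<P(t)\le P^\ast$---is exactly the paper's approach.

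Your attempt to upgrade $\le$ to $<$, however, contains a gap. From $P(t_0)=P^\ast$ and $P'(t_0)=0$, the first equation of \eqref{stud} yields only the single scalar relation
\[
\sum_{i=0}^{n}\beta_i(P^\ast)\,P_i(t_0)=\mu(P^\ast)\,P^\ast,
\]
which is one equation in the $n+1$ unknowns $P_0(t_0),\dots,P_n(t_0)$. Your appeal to ``the third equation of \eqref{stud} evaluated at $t_0$'' gives $P_i'(t_0)=iP_{i-1}(t_0)-(\rho+\mu(P^\ast))P_i(t_0)$, but nothing forces $P_i'(t_0)=0$ for $i\ge1$; the recursion $P_i^\ast=\dfrac{i}{\rho+\mu(P^\ast)}P_{i-1}^\ast$ used in \eqref{ab}--\eqref{nontriv} is therefore unavailable, and you cannot conclude $P_i(t_0)=P_i^\ast$. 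The uniqueness-of-solutions step then does not apply. That said, the paper's own proof is no more rigorous on this point: it likewise establishes only $0<P(t)\le P^\ast$ and then appeals to genericity (excluding the equilibrium trajectory) for strictness---precisely what you do in your closing sentence. So your argument is neither stronger nor weaker than the paper's here; both leave the strict inequality as a tacit exclusion of equilibrium initial data rather than a proved consequence.
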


\begin{proof}[Proof of Theorem \protect\ref{L3}]
Assume that $R_{n}(P(t))\geq 1$ for all $t\geq 0$. We first show that this
condition implies $R_0 > 1$.

Since $P(t) > 0$ for all $t \geq 0$ (by Lemma \ref{inv} and the positivity
of initial data), and $R_n$ is strictly decreasing, we have for any $t \geq 0
$: 
\begin{equation*}
R_{0}=R_n(0)>R_{n}(P(t))\geq 1.
\end{equation*}%
Therefore, $R_0 > 1$, which by Theorem \ref{mp3} ensures the existence of a
unique positive equilibrium $P^{\ast} = R_n^{-1}(1) > 0$ satisfying $%
R_n(P^{\ast}) = 1$.

Now, we establish the inequality $P(t) \leq P^{\ast}$ for all $t \geq 0$.
Since $R_{n}(P(t))\geq 1 = R_n(P^{\ast})$ for all $t\geq 0$, and $R_n$ is
strictly decreasing, we obtain 
\begin{equation*}
P(t)\leq P^{\ast }\quad \text{for all }t\geq 0.
\end{equation*}%
Combining this with the positivity result from Lemma \ref{inv}, which
guarantees $P(t) > 0$ for all $t \geq 0$, we conclude 
\begin{equation*}
0<P(t)\leq P^{\ast }\text{ for all }t\geq 0.
\end{equation*}%
Moreover, the strict inequality $P(t) < P^{\ast}$ holds if $R_n(P(t)) > 1$
for any $t$. If $R_n(P(t)) = 1$ for all $t \geq 0$, then $P(t) = P^{\ast}$
for all $t \geq 0$, which corresponds to the system being at equilibrium.
However, generically, we expect $0 < P(t) < P^{\ast}$ for trajectories
starting below the equilibrium.

To be more precise, we can write the chain of inequalities: 
\begin{equation*}
0=R_{n}^{-1}\left( R_{0}\right) <R_{n}^{-1}\left( R_n(P(t))\right)=P(t)\leq
R_{n}^{-1}(1)= P^{\ast }\text{, for all }t\geq 0,
\end{equation*}%
where the first inequality follows from $R_0 > 1$ and the strict
monotonicity of $R_n^{-1}$, noting that $R_n^{-1}(R_0) = 0$.
\end{proof}

\subsection{An Illustrative Example \label{IE}}

An illustrative example highlights the dynamics of a population model with
polynomial and exponential fertility and quadratic mortality functions is
given. We proceed with an additional simplification of the form%
\begin{equation*}
n=1,\text{ }\rho =2,\text{ }\beta _{0}\left( P\left( t\right) \right)
=e^{-P\left( t\right) },\text{ }\beta _{1}\left( P\left( t\right) \right) =%
\frac{9}{2}e^{-P\left( t\right) }\text{ and }\mu \left( P\left( t\right)
\right) =1+P^{2}\left( t\right) .
\end{equation*}%
The system, denoted as (\ref{afin}), takes the form: 
\begin{equation}
\begin{cases}
\begin{pmatrix}
P^{\prime }(t) \\ 
P_{0}^{\prime }(t) \\ 
P_{1}^{\prime }(t)%
\end{pmatrix}%
=%
\begin{pmatrix}
-1-P^{2}\left( t\right) & e^{-P\left( t\right) } & \frac{9}{2}e^{-P\left(
t\right) } \\ 
0 & -3-P^{2}\left( t\right) +e^{-P\left( t\right) } & \frac{9}{2}e^{-P\left(
t\right) } \\ 
0 & 1 & -3-P^{2}\left( t\right)%
\end{pmatrix}%
\begin{pmatrix}
P(t) \\ 
P_{0}(t) \\ 
P_{1}(t)%
\end{pmatrix}%
, \\ 
\\ 
P(0)=\int_{0}^{\infty }p_{0}(a)da\text{, } \\ 
P_{0}(0)=\int_{0}^{\infty }e^{-2a}p_{0}(a)da, \\ 
P_{1}(0)=\int_{0}^{\infty }ae^{-2a}p_{0}(a)da\text{ }%
\end{cases}
\label{par}
\end{equation}%
where $p_{0}(a)>0$ is given. Since, the net reproduction rate is expressed
as: 
\begin{equation}
R_{1}(P\left( t\right) )=\frac{e^{-P\left( t\right) }}{(3+P^{2}\left(
t\right) )^{1}}+\frac{\frac{9}{2}e^{-P\left( t\right) }}{(3+P^{2}\left(
t\right) )^{2}}\Longrightarrow R_{0}=\frac{1}{(3)^{1}}+\frac{9}{2\cdot
(3)^{2}}=\frac{5}{6}<1  \label{nrp}
\end{equation}%
the equilibrium point of the system (\ref{par}) is guaranteed to be $0$, by
Theorem \ref{mp3}. Hence, $\left( P^{\ast },P_{0}^{\ast },P_{1}^{\ast
}\right) =(0,0,0)$, is the only equilibrium point. The findings presented in 
\cite{CPS}, confirm the system's asymptotic stability. Moreover, from
Theorem \ref{L1} the population $P\left( t\right) $ vanish at infinity.

Using the support of Microsoft Copilot in Edge, we visualize the population
dynamics alongside the equilibrium point and the net reproduction rate
within the same coordinate system: 
\begin{equation*}
\end{equation*}

\begin{figure}[!ht]
\centering
\subfloat{\includegraphics[width=0.65\textwidth]{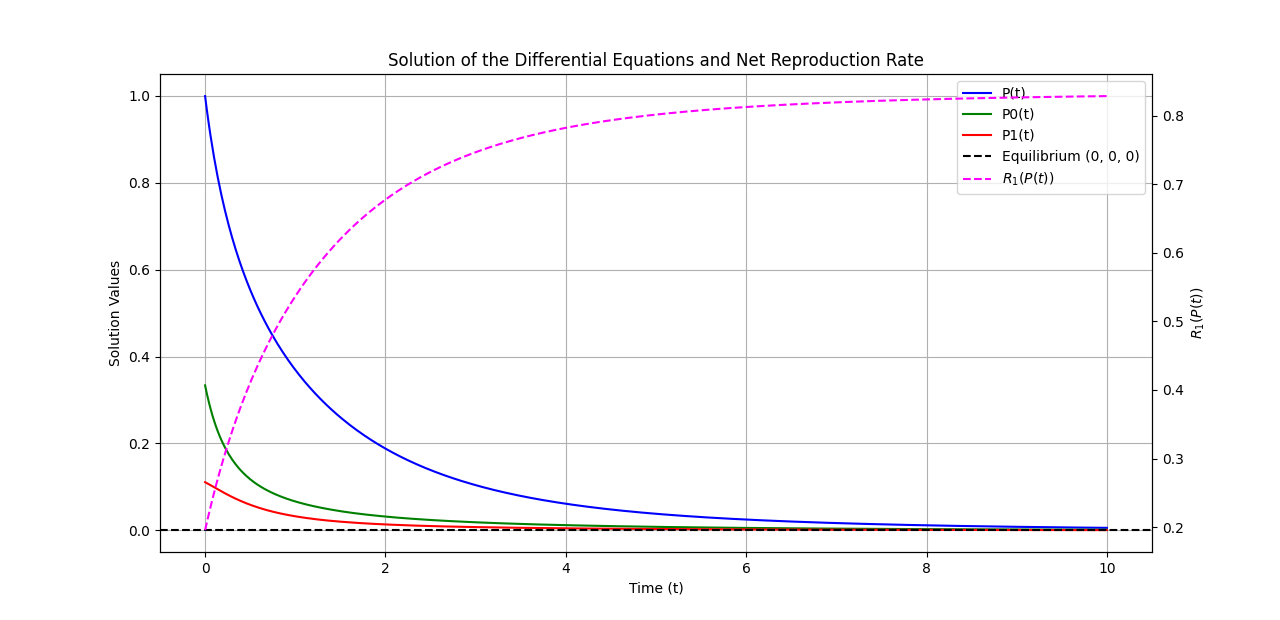}}
\caption{Population Trajectories and Net Reproduction Rate under System (%
\protect\ref{stud}) }
\label{fig:oneFigures}
\end{figure}

\begin{equation*}
\end{equation*}%
The comprehensive Python code implementing this methodology, developed with
the assistance of Microsoft Copilot in EDGE, is provided in the next section.

The results align with theoretical forecasts, underscoring the impact of age
structure on population development.

\section{Implementation \label{a}}

\subsection{Mathematical Explanation of the Process}

We begin with an age density function defined by

\begin{equation*}
p_{0}(a)=e^{-a},
\end{equation*}%
which describes a decaying profile with respect to the age variable $a$.
From this density, three integrals are computed to establish the initial
state:

\begin{equation*}
P_{0}=\int_{0}^{50}p_{0}(a)\,da,\quad
P_{00}=\int_{0}^{50}e^{-2a}\,p_{0}(a)\,da,\quad
P_{01}=\int_{0}^{50}a\,e^{-2a}\,p_{0}(a)\,da.
\end{equation*}%
These integrals yield the starting values for the state variables. Next, a
system of three coupled ordinary differential equations \ref{par}) is
considered for the evolution of the state over time. Let the functions $P(t)$%
, $P_{0}(t)$, and $P_{1}(t)$ denote three interdependent quantities whose
dynamics are governed by the system (\ref{par}). Here, the coefficients
depend nonlinearly on $P(t)$, which implies that the evolution of each
variable is influenced by both its own state and by the state of the others.
In addition, the net reproduction rate is defined as a function of $P(t)$ by
(\ref{nrp}). This expression quantifies the reproduction potential based on
the current value of $P(t)$.

After computing the evolution of the three state variables over a chosen
time interval, $P(t)$, $P_{0}(t)$, and $P_{1}(t)$ are plotted as functions
of time. Simultaneously, the reproduction rate $R_{1}(P(t))$ is calculated
alongside these variables and displayed on a secondary vertical axis. This
dual-axis visualization allows one to compare the dynamic behavior of the
system with the corresponding reproduction measure in a single combined
graphical representation.

\subsection{ Python implementation}

At the following address, %
\url{https://github.com/coveidragos/population-dynamics/blob/main/population.py}%
, we provide an annotated Python implementation that illustrates our
theoretical framework.

\section{Additional remark on the obtained results\label{fin}}

Theorems \ref{mp1}-\ref{mp3} and \ref{L1}-\ref{L3} remain valid if the
fertility function, $\beta (a,P(t))$, in (\ref{eq:fertility_function}) is
substituted with: 
\begin{equation}
\beta (a,P(t))=\sum_{i=0}^{n}\beta _{i}(P(t))e^{-\rho _{i}a}
\label{eq:fertility_function2}
\end{equation}%
where $\rho _{i}>0$ are parameters governing the exponential decay with age,
provided that all other conditions on the relevant parameters and functions
are satisfied. On this settings, the net reproduction rate, denoted by $%
R_{n}(x)$, is defined as: 
\begin{equation*}
R_{n}(x)=\sum_{i=0}^{n}\frac{\beta _{i}(x)}{\rho _{i}+\mu (x)}\text{ and }%
R_{0}=R_{n}(0)=\sum_{i=0}^{n}\frac{\beta _{i}(0)}{\rho _{i}+\mu (0)}.
\end{equation*}%
The computation of the solution to the renewal equation 
\begin{equation*}
p\left( 0,t\right) =\overset{n}{\underset{i=0}{\Sigma }}\beta _{i}\left(
P\left( t\right) \right) \int_{0}^{\infty }e^{-\rho _{i}\sigma }p\left(
\sigma ,t\right) d\sigma =\overset{n}{\underset{i=0}{\Sigma }}\beta
_{i}\left( P\left( t\right) \right) P_{i}\left( t\right) ,
\end{equation*}%
where 
\begin{equation*}
P_{i}\left( t\right) =\int_{0}^{\infty }e^{-\rho _{i}\sigma }p\left( \sigma
,t\right) d\sigma \text{, for }i=0,1,2,..n,
\end{equation*}%
is reduced to solving a system of $n+2$ differential equations (\ref{afin}),
where%
\begin{equation*}
\begin{tabular}{l}
$A\left( P\left( t\right) \right) =%
\begin{pmatrix}
-\mu (P(t)) & \beta _{0}(P(t)) & \cdots & \beta _{n}(P(t)) \\ 
0 & -\rho _{0}-\mu (P(t))+\beta _{0}(P(t)) & \cdots & \beta _{n}(P(t)) \\ 
\vdots & \vdots & \ddots & \vdots \\ 
0 & \beta _{0}(P(t)) & ... & -\rho _{n}-\mu (P(t))+\beta _{n}(P(t))%
\end{pmatrix}%
.$%
\end{tabular}%
\end{equation*}

\section{Future Directions \label{fd}}

The fractional reformulation of the nonlinear logistic model for
age-structured populations considered herein offers a fertile ground for
future exploration. In the resulting fractional model, the substitution of
the classical time derivative by a Caputo fractional derivative of order $%
\alpha \in (0,1)$ preserves the core analytical structure of the original
model. In particular, the existence and uniqueness result (cf. Theorem~\ref%
{mp3}) continues to hold under analogous assumptions on the fertility and
mortality functions; the proof follows by similar fixed-point arguments and
fractional Gr\H{o}nwall's inequalities (see, e.g., \cite%
{diethelm2010,kilbas2006,podlubny}).

Moreover, the equilibrium analysis based on the net reproduction condition

\begin{equation*}
R_{n}(P^{\ast })=\sum_{i=0}^{n}\,\frac{i!\beta _{i}(P^{\ast })}{\bigl(\rho
+\mu (P^{\ast })\bigr)^{i+1}}=1,
\end{equation*}%
remains valid in the fractional setting, thereby ensuring that the
characterization of nontrivial equilibria is retained. Likewise, the
asymptotic decay properties established in Theorem~\ref{L1} for the
classical system are expected to extend to the fractional model by employing
similar techniques adapted to nonlocal operators.

These pre-conceived results open several promising avenues for further
research. Future work may rigorously establish the fractional counterparts
of the main theorems using the analytical methods outlined above,
investigate bifurcation and stability phenomena as the fractional order
varies, and develop efficient numerical schemes for simulating the full
time-dependent dynamics. Detailed proofs and further arguments can be
recovered from the cited literature and from the comprehensive discussion
provided in the present article.

More future research directions, as highlighted in this article, may focus
on enhancing the model by incorporating stochastic processes, environmental
variability, and intergenerational dependencies. In addition, computational
advancements---including the implementation of numerical simulations and
machine learning techniques---could further refine the analysis of complex
population dynamics. Moreover, the article underscores that
interdisciplinary applications, such as those in epidemiology and
conservation biology, present exciting opportunities to expand the utility
of the proposed framework.

\section{Conclusions \label{CFD}}

This study rigorously examines a nonlinear logistic model tailored for
age-structured populations, incorporating interdependent fertility and
mortality functions. By addressing key challenges in the field, such as the
existence and stability of equilibrium solutions, and exploring the
asymptotic behavior of populations, this work provides a robust theoretical
framework for understanding long-term population dynamics.

The research resolves longstanding conjectures and extends the applicability
of age-structured models by considering nonlinear density-dependent effects.
The findings, which include the characterization of equilibrium states under
varying fertility and mortality rates, emphasize the importance of
demographic distributions in shaping population trajectories. Additionally,
the mathematical methods employed bridge the gap between theory and
practical applications, offering insights valuable for demographic analysis,
ecological modeling, and resource management.

By advancing the theoretical and practical understanding of nonlinear
age-structured population models, this study paves the way for future
innovations in mathematical biology and its applications in addressing
real-world challenges.

\section*{Declarations}

\begin{itemize}
\item Funding: Not applicable

\item Conflict of interest: No Conflict of interest.

\item Ethics approval and consent to participate: Not applicable.

\item Consent for publication: Not applicable.

\item Data availability: No datasets were generated or analysed during the
current study.

\item Materials availability: Not applicable

\item Code availability: Not applicable

\item Author contribution: Not applicable.
\end{itemize}

\end{document}